\documentclass[12pt]{article}

\usepackage[T1]{fontenc}
\usepackage{lmodern}
\usepackage{textcomp}
\usepackage{amsfonts}
\usepackage[skins,breakable]{tcolorbox}
\usepackage{bm}
\usepackage{array}
\usepackage{booktabs}

\usepackage{amsmath}%
\usepackage{amssymb}%

\setlength{\marginparwidth}{6cm} 
\usepackage{todonotes}
\usepackage[in]{fullpage}%

\usepackage[amsmath,thmmarks]{ntheorem}%
\theoremseparator{.}%

\usepackage{titlesec}%
\titlelabel{\thetitle. }%
\usepackage{xcolor}%
\usepackage{mleftright}%
\usepackage{xspace}%
\usepackage{graphicx}
\usepackage{matlab-prettifier}

\usepackage{hyperref}%
\hypersetup{%
	unicode,
	breaklinks,%
	colorlinks=true,%
	urlcolor=[rgb]{0.25,0.0,0.0},%
	linkcolor=[rgb]{0.5,0.0,0.0},%
	citecolor=[rgb]{0,0.2,0.445},%
	filecolor=[rgb]{0,0,0.4},
	anchorcolor=[rgb]={0.0,0.1,0.2}%
}
\usepackage[ocgcolorlinks]{ocgx2}

\usepackage{cleveref}

%%%%%%%%%%%%%%%%%%%%%%%%%%%%%%%%%%%%%%%%%%%%%%%%%%%%%%%%%%%%%%%%%%%%%%%%
% Defining theorem like environments
%

\theoremseparator{.}%

\theoremstyle{plain}%
\newtheorem{theorem}{Theorem}[section]

\newtheorem{lemma}[theorem]{Lemma}

\newtheorem{corollary}[theorem]{Corollary}

\newtheorem{proposition}[theorem]{Proposition}

\theoremstyle{plain}%
\theoremheaderfont{\sf} \theorembodyfont{\upshape}%
\newtheorem*{remark:unnumbered}[theorem]{Remark}%
\newtheorem{remark}[theorem]{Remark}%

\newtheorem{example}[theorem]{Example}

%

% Proof environment
\newcommand{\myqedsymbol}{\rule{2mm}{2mm}}

\theoremheaderfont{\em}%
\theorembodyfont{\upshape}%
\theoremstyle{nonumberplain}%
\theoremseparator{}%
\theoremsymbol{\myqedsymbol}%
\newtheorem{proof}{Proof:}%

%

% theorem block end
%%%%%%%%%%%%%%%%%%%%%%%%%%%%%%%%%%%%%%%%%%%%%%%%%%%%%%%%%%%%%%%%%%%%

%%%%%%%%%%%%%%%%%%%%%%%%%%%%%%%%%%%%%%%%%%%%%%%%%%%%%%%%%%%%%%%%%%
% Color emph

\providecommand{\emphind}[1]{}%
\renewcommand{\emphind}[1]{\emph{#1}\index{#1}}

\definecolor{blue25emph}{rgb}{0, 0, 11}

\providecommand{\emphic}[2]{}
\renewcommand{\emphic}[2]{\textcolor{blue25emph}{%
		\textbf{\emph{#1}}}\index{#2}}

\providecommand{\emphi}[1]{}%
\renewcommand{\emphi}[1]{\emphic{#1}{#1}}

\definecolor{almostblack}{rgb}{0, 0, 0.3}

\providecommand{\emphw}[1]{}%
\renewcommand{\emphw}[1]{{\textcolor{almostblack}{\emph{#1}}}}%

\providecommand{\emphOnly}[1]{}%
\renewcommand{\emphOnly}[1]{\emph{\textcolor{blue25}{\textbf{#1}}}}

%%%%%%%%%%%%%%%%%%%%%%%%%%%%%%%%%%%%%%%%%%%%%%%%%%%%%%%%%%%%%%%%%%%%%%
%    Handling references
%%%%%%%%%%%%%%%%%%%%%%%%%%%%%%%%%%%%%%%%%%%%%%%%%%%%%%%%%%%%%%%%%%%%%%

\newcommand{\HLink}[2]{\hyperref[#2]{#1~\ref*{#2}}}
\newcommand{\HLinkSuffix}[3]{\hyperref[#2]{#1\ref*{#2}{#3}}}

\providecommand{\deflab}[1]{}
\renewcommand{\deflab}[1]{\label{def:#1}}

\providecommand{\eqlab}[1]{}%
\renewcommand{\eqlab}[1]{\label{equation:#1}}

%%%%%%%%%%%%%%%%%%%%%%%%%%%%%%%%%%%%%%%%%%%%%%%%%%%%%%%%%%%%%%%%%%%

\newcommand{\remove}[1]{}%

%

%

%

%

%
%

%%%%%%%%%%%%%%%%%%%%%%%%%%%%%%%%%%%%%%%%%%%%%%%%%%%%%%%%%%%%%%%%%%%%%%%%%
% Defining comptenum environment using enumitem
\usepackage[inline]{enumitem}

\newlist{compactenumA}{enumerate}{5}%
\setlist[compactenumA]{topsep=0pt,itemsep=-1ex,partopsep=1ex,parsep=1ex,%
	label=(\Alph*)}%

\newlist{compactenuma}{enumerate}{5}%
\setlist[compactenuma]{topsep=0pt,itemsep=-1ex,partopsep=1ex,parsep=1ex,%
	label=(\alph*)}%

\newlist{compactenumI}{enumerate}{5}%
\setlist[compactenumI]{topsep=0pt,itemsep=-1ex,partopsep=1ex,parsep=1ex,%
	label=(\Roman*)}%

\newlist{compactenumi}{enumerate}{5}%
\setlist[compactenumi]{topsep=0pt,itemsep=-1ex,partopsep=1ex,parsep=1ex,%
	label=(\roman*)}%

\newlist{compactitem}{itemize}{5}%
\setlist[compactitem]{topsep=0pt,itemsep=-1ex,partopsep=1ex,parsep=1ex,%
	label=\ensuremath{\bullet}}%

%%%%%%%%%%%%%%%%%%%%%%%%%%%%%%%%%%%%%%%%%%%%%%%%%%%%%%%%%%%%%%%%%%%%%%%%%%

%
%%%%%%%%%%%%%%%%%%%%%%%%%%%%%%%%%%%%%%%%%%%%%%%%%%%%%%%%%%%%%%%%%%%

\numberwithin{figure}{section}%
\numberwithin{table}{section}%
\numberwithin{equation}{section}%

\newcommand{\Sym}[1]{S_{#1}}

\usepackage{ragged2e}
\usepackage{marginnote}

  \newtcolorbox{cvbox}[1][]{
  enhanced,
    title=#1,
    breakable = true,
    overlay={%
        \ifcase\tcbsegmentstate
        % 0 = Box contains only an upper part
        \or%
        % 1 = Box contains an upper and a lower part
        %\path[draw=red] (segmentation.west)--(frame.south east);
        \else%
        % 2 = Box contains only a lower part
        %\path[draw=red] (frame.north west)--(frame.south east);
        \fi%
    }
    }

    \setlist[itemize]{
        itemsep=6pt,  % Space between individual items
        parsep=0pt,   % Space between paragraphs within an item
        topsep=12pt,  % Space before the first item of the list
        partopsep=0pt % Extra space added to topsep when a list starts a new paragraph
    }

%%%%%%%%%%%%%%%%%%%%%%%%%%%%%%%%%%%%%%%%%%%%%%%%%%%%%%%%%%%%%%%%%%%
% Authors thanks
%%%%%%%%%%%%%%%%%%%%%%%%%%%%%%%%%%%%%%%%%%%%%%%%%%%%%%%%%%%%%%%%%%%

\newcommand{\RaghavThanks}[1]{%
   \thanks{%
      Division of Science; %
      New York University; %
      Abu Dhabi, UAE;  %
      \href{mailto:r.tripathi@nyu.edu}{r.tripathi@nyu.edu}%
     % \url{http://raghavendratripathi.github.io/}. %
   #1%
   }%
}

\newcommand{\LudovickThanks}[1]{%
   \thanks{%
      Department of Mathematics and Statistics;
      Université Laval;
      \href{mailto:}{ludovick.bouthat.1@ulaval.ca} %
   #1%
   }%
}

%%%%%%%%%%%%%%%%%%%%%%%%%%%%%%%%%%%%%%%%%%%%%%%%%

%\newcommand{\ford}[1]{%   
%\todo[author=Ford,inline,color=red!25]{#1}}
%%%%%%%%%%%%%%%%%%%%%%%%%%%%%%%%%%%%%%

% \BibLatexMode{%
%    \bibliography{ref}
% }

\bibliographystyle{alpha}
%\bibliography{main}

\begin{document}

\title{Number of orbits of $k$-subsets of permutations}

\author{
Ludovick Bouthat
\LudovickThanks{}
\and 
Raghavendra Tripathi
\RaghavThanks{}
}

\date{}

\maketitle
\begin{abstract}
    Let $\Sym{n}$ denote the symmetric group of order $n$. Say that two subsets $x, y\subseteq \Sym{n}$ are \emph{equivalent} if there exist permutations $g_1, g_2\in \Sym{n}$ such that $g_1xg_2=y$, where multiplication is understood elementwise. Recently, [Tripathi, 2024] and [Kushwaha and Triathi, 2025] asked for the asymptotics of $T(n,k)$, the number of subsets of $\Sym{n}$ of size $k$ up to this equivalence. 
    It is easy to see that $T(n,0)=T(n, 1)=1$ and $T(n, 2)=p(n)-1$, where $p(n)$ is the number of integer partitions of $n$. In this work, we show that $T(n,k) = \Lambda_n(k)(1+o_n(1))$ for $3\leq k\leq n!-3$, where $\Lambda_n(k)=\frac{1}{n!^2}\binom{n!}{k}$. Furthermore, we prove that\vspace{-2pt}
    \[
\frac{1}{\Lambda_n(n!/2)}T\!\left(n,\left[\sqrt{\tfrac{n!}{4}}x+\tfrac{n!}{2}\right]\right) ~\xrightarrow{n\to\infty}~ \exp\!\left(-\tfrac{x^2}{2}\right)\,,
\]
uniformly over $\mathbb{R}$.
\end{abstract}

%%%%%%%%%%%%%%%%%%%%%%%%%%%%%%%%%%%%%%%%%%%%%%%%%%%%%%
%%%%%%%%%%%%%%%%%%%%%%%%%%%%%%%%%%%%%%%%%%%%%%%%%%%%%%
\section{Introduction}
\label{sec:introduction}

\subsection{Background and motivation}

% For $n\in \mathbb{N}$, let $\Sym{n}$ denote the permutation group on $[n]:=\{1,2, \ldots, n\}$. The symmetric group $\Sym{n}$ is a rich source of problems in combinatorics and probability. Many statistics of permutations naturally occur in surprisingly disparate areas of mathematics.  Naturally, the counting problems related to the statistics of symmetric groups are too vast to survey here. We thus refer the reader to~\cite{Ford2022Cycle,arratia1997random, flajolet2009analytic,van2001course} and the references therein. One particularly interesting and important class of such problems is counting the orbits of the action of the symmetric group $\Sym{n}$. 

For $n \in \mathbb{N}$, let $\Sym{n}$ denote the symmetric group on $[n] := \{1, 2, \ldots, n\}$. The symmetric group $\Sym{n}$ is a rich source of problems in combinatorics and probability, with many permutation statistics arising naturally in surprisingly disparate areas of mathematics. Counting problems related to such statistics are too numerous to survey here; we thus refer the reader to~\cite{Ford2022Cycle, arratia1997random, flajolet2009analytic, van2001course} and the references therein for comprehensive treatments. Among these problems, a particularly interesting and important class concerns the enumeration of orbits under various natural actions of $\Sym{n}$ on some finite set.

% We study the number of orbits of the action of $\Sym{n}\times \Sym{n}$ on the $k$-subsets of $\Sym{n}$.
% %For instance, the symmetric group $\Sym{n}$ acts on itself via conjugation. The orbits of this action, \emph{conjugacy classes}, are in one-to-one correspondence with the integer partitions of $n$. The motivation for our work comes from the questions raised in~\cite{tripathi2025some,kushwaha2025note} recently. 
% For $1\leq k\leq n!$, let $\Omega(n, k)$ denote the collection of subsets of $\Sym{n}$ of size $k$. Let $G=\Sym{n}\times \Sym{n}$. Let $(g_1, g_2)\in G$ and let $x=\{x_1, \ldots, x_k\}\in \Omega(n, k)$. We define the action
% \[(g_1, g_2)\cdot x = \{g_1xg_2, \ldots, g_1xg_2\}\;.\]
% How many orbits are there? Or, equivalently, how many elements $x\in \Omega(n, k)$ are fixed on average by a random $(g_1, g_2)\in G$? When $k=1$, it is easily seen that this action is transitive, that is, there is only one orbit. However, this action is not transitive for $k\geq 2$. 
Our goal is to study the number of orbits of the action of $\Sym{n} \times \Sym{n}$ on the collection of $k$-subsets of $\Sym{n}$. For $0 \leq k \leq n!$, let $\Omega(n, k)$ denote the family of all subsets of $\Sym{n}$ of size $k$. Let $(g_1, g_2) \in \Sym{n} \times \Sym{n}$ and $x = \{x_1, \ldots, x_k\} \in \Omega(n, k)$. Define the action
\[
(g_1, g_2) \cdot x := \{g_1 x_1 g_2, \ldots, g_1 x_k g_2\}\;.
\]
When $k = 1$, this action is easily seen to be transitive; that is, there is only one orbit. However, the action is no longer transitive when $k \geq 2$.
How many orbits does this action have for $k\geq 2$? Equivalently, what is the expected number of fixed points of a uniformly random $(g_1, g_2) \in \Sym{n} \times \Sym{n}$ acting on $\Omega(n, k)$? 

For $0 \leq k \leq n!$, let $T(n, k)$ denote the number of orbits of the action of $\Sym{n} \times \Sym{n}$ on $\Omega(n, k)$. The problem of obtaining good asymptotics for $T(n, k)$ was raised in~\cite{kushwaha2025note}. It was already observed in~\cite{tripathi2025some} that $T(n, 2) = p(n) - 1$ and $T(n, k) = T(n, n! - k)$ for all $k$, where $p(n)$ is the number of partitions of the integer $n$. For small values of $n$, the exact values of $T(n, k)$ are known. In particular, for $n = 3$ and $n = 4$, these values are recorded in \Cref{tab:table for S3}. %(see also~\cite[Section~4]{tripathi2025some})  
 Note that for $n = 4$, we list values only up to $k = 12 = n!/2$ since $T(n, k)$ is symmetric about $n!/2$. For $n \leq 6$, the values of $T(n, k)$ appear in~\cite{OEIS_setEquivalent}. Computing the number of orbits even for moderately large $n$ appears to be intractable. 
\begin{table}[h!]
    \centering
    \begin{tabular}{|c||c|c|c|c|c|c|c|c|c|c|c|c|c|}
    \hline 
        $k$ & 0 & 1  & 2  & 3  & 4  & 5  & 6 & 7 & 8 & 9 & 10 & 11 & 12 \\
    \hline &&&&&&&&&&&&&\\[-12.5pt]
    \hline 
       $T(3, k)$ & 1 & 1 & 2 & 2 & 2 & 1 & 1 &&&&&&\\
    \hline
    $T(4, k)$ & 1 & 1 & 4 & 10 & 41 & 103 & 309 & 691 & 1458 & 2448 & 3703 & 4587 & 5050 \\
    \hline
    \end{tabular}
    \caption{Number of orbits of $k$-subsets of $\Sym{3}$ and $\Sym{4}$}
    \label{tab:table for S3}
\end{table}
\vspace{-1.5pt}

Since $T(n, k)$ is the expected number of fixed points of a uniformly random $(g_1, g_2) \in \Sym{n} \times \Sym{n}$ acting on $\Omega(n, k)$, a trivial lower bound is
\[
T(n, k) \geq \frac{1}{n!^2} \binom{n!}{k} =: \Lambda_n(k),
\]
which corresponds to the scenario where all $n!^2 - 1$ non-identity pairs $(g_1, g_2) \in \Sym{n} \times \Sym{n}$ fix no element of $\Omega(n, k)$, while the identity pair $(id, id)$ fixes all $\binom{n!}{k}$ elements. This bound is far from sharp when $k = 0$, $1$, or $2$, as the non-identity pairs of $\Sym{n} \times \Sym{n}$ fix \emph{too many} elements of $\Omega(n, k)$ when $k$ is small. However, it would be close to the true value if most orbits had a size of the order of $n!^2$. One of our main results confirms that this is indeed the case for $3\leq k \leq n!-3$.

\vspace{-1.5pt}
\subsection{Main results}

\begin{theorem}[Asymptotic of $T(n, k)$]
\label{thm:Asymptotic}
Let $T(n, k)$ be as above, and let $3\leq k \leq n!-3$. Then, as $n\to \infty$, we have 
\[ 
T(n, k) =  \frac{1}{n!^2}\binom{n!}{k}(1+o_n(1))\,.
\]
Moreover, the $o_n(1)$ term can be taken to be $O(n^{-2})$. In that case, both the domain $[3, n! - 3]$ and the quadratic rate of convergence cannot be made larger.
\end{theorem}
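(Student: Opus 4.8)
The plan is to evaluate $T(n,k)$ via the Cauchy--Frobenius (Burnside) lemma and show that, once the contribution of the identity is removed, everything else is negligible. Writing $G=\Sym n\times\Sym n$ and $a=g_1,\ b=g_2$, the pair $(a,b)$ fixes a $k$-subset $x$ precisely when $x$ is a union of cycles of the permutation $\phi_{a,b}\colon h\mapsto ahb$ of the $n!$-element set $\Sym n$. Since $\phi_{a,b}=L_aR_b$ is the product of the commuting permutations $L_a\colon h\mapsto ah$ and $R_b\colon h\mapsto hb$, if $c_j(a,b)$ denotes the number of $j$-cycles of $\phi_{a,b}$ then the number of fixed $k$-subsets is $[q^k]\prod_j(1+q^j)^{c_j(a,b)}$. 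The identity pair contributes all $\binom{n!}{k}$ subsets, giving the main term $\Lambda_n(k)$, so I reduce the theorem to the estimate $\sum_{(a,b)\neq(\mathrm{id},\mathrm{id})}\bigl|\mathrm{Fix}(a,b)\bigr|=O(n^{-2})\binom{n!}{k}$; by the symmetry $T(n,k)=T(n,n!-k)$ it suffices to treat $3\le k\le n!/2$.

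Second, I would extract the cycle structure of $\phi_{a,b}$ that controls its short cycles. Because $L_a$ has all cycles of length $\mathrm{ord}(a)$, the map $\phi_{a,b}$ is fixed-point-free unless both $a\neq\mathrm{id}$ and $b\neq\mathrm{id}$; more precisely, $\phi_{a,b}(h)=h$ iff $a=hb^{-1}h^{-1}$, so the number of fixed points is $|C_{\Sym n}(a)|$ when $a$ is conjugate to $b^{-1}$ and $0$ otherwise. The same computation applied to $\phi_{a,b}^{\,m}(h)=a^m h b^m$ shows that the number of points of period dividing $m$ equals $|C_{\Sym n}(a^m)|$ when $a^m\sim b^{-m}$ and $0$ otherwise, which bounds the short-cycle counts $c_j(a,b)$ in terms of centralizer sizes of powers of $a$.

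Third, I would bound $|\mathrm{Fix}(a,b)|$ for fixed $k$ and sum over conjugacy data. For $k$ in the bulk (say $n!^{1/2}\le k\le n!/2$) even the worst case --- $\phi_{a,b}$ a fixed-point-free involution, giving $\binom{n!/2}{k/2}$ --- is exponentially smaller than $\binom{n!}{k}$, and the $n!^2$ pairs cannot compensate, so this range is harmless. For small $k$ the dominant contribution comes from choosing all $k$ elements among the fixed points of $\phi_{a,b}$, i.e.\ from the term $\binom{F}{k}$ with $F=|C_{\Sym n}(a)|$; a pair with $a$ in a conjugacy class $K$ (and $a\sim b^{-1}$) has $F=n!/|K|$ and there are $|K|^2$ such pairs, so the contribution of $K$ to the relative error is $\asymp |K|^2\binom{n!/|K|}{k}/\binom{n!}{k}\asymp |K|^{2-k}$. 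Excluding the identity class (whose only admissible pair is the excluded $(\mathrm{id},\mathrm{id})$), this is maximised by the smallest nontrivial class, the transpositions with $|K|=\binom n2\sim n^2/2$, yielding $\asymp|K|^{-1}\sim 2/n^2$ at $k=3$ and strictly smaller powers of $n$ for $k\ge4$. Summed over all classes, this gives the claimed $O(n^{-2})$.

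The main obstacle is the uniform control of the subleading terms --- those fixed $k$-subsets that use a few genuine ($\ge2$)-cycles of $\phi_{a,b}$ rather than only fixed points --- because the crude count of cycle-unions overcounts exactly in the delicate range $k\in\{3,4\}$. Handling these requires the period bounds of the second step, organised by the conjugacy classes of the powers $a^m$ and $b^m$, to show each such term is dominated by the pure fixed-point term $\binom{F}{k}$; this is where the bookkeeping is heaviest. Finally, tightness is read off from the same analysis: the transposition contribution shows the relative error is $\Theta(n^{-2})$ at $k=3$ (and, by symmetry, at $k=n!-3$), so the rate cannot be improved, while at $k=2$ one has $\Lambda_n(2)\to\tfrac12$ yet $T(n,2)=p(n)-1\to\infty$, so the domain cannot be extended to $k=2$ or $k=n!-2$.
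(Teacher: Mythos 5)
Your skeleton is the same as the paper's: Burnside's lemma, counting fixed $k$-subsets of $\phi_{a,b}\colon h\mapsto ahb$ through its cycle structure (your generating function $[q^k]\prod_j(1+q^j)^{c_j(a,b)}$ is exactly the formula of \Cref{thm:GeneralAction}), periodic points counted by centralizers of powers (your $|C_{\Sym{n}}(a^m)|$ when $a^m\sim b^{-m}$ is the paper's $|\widetilde{T}_{\nu_1,\nu_2}(m)|=z_{\nu^m}\delta_{\nu_1^m,\nu_2^m}$ in \Cref{cor - two-side}), and the same sharpness witnesses. But there is a genuine gap exactly at the step you defer as ``bookkeeping'': you have no quantitative bound on the period-point counts $z_{\nu^m}$, and without one the argument does not close in the intermediate range, say $6\le k\le \sqrt{n!}$. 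There the number of class pairs is $p(n)^2=e^{\Theta(\sqrt{n})}$ and the number of shapes $\lambda\vdash k$ is $p(k)$, both superpolynomial, so per-term bounds that are merely polynomial in $|K|$ cannot absorb them. The paper's engine is precisely \Cref{lem - ratio}, $z_{\nu^p}\le n!\,p^{(1-n)/p}z_\nu$ for prime $p$, whence $|T_{\nu_1,\nu_2}(p)|\le n!\,p^{(1-n)/p}\sqrt{z_{\nu_1}z_{\nu_2}}$ (\Cref{lem - partition}); it is this factor, exponentially small in $n$, that beats $p(n)^2$ and $p(k)$, and even with it the uniform case $k\ge 6$ requires the Cauchy--Schwarz/Chu--Vandermonde manipulations and a log-convexity endpoint evaluation (\Cref{prop: k larger than 5}). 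Moreover, your claim that every mixed term is ``dominated by the pure fixed-point term $\binom{F}{k}$'' is false as stated: for most classes $z_\nu<k$ already for moderate $k$, so $\binom{z_\nu}{k}=0$ while the contributions built from $2$-cycles and longer cycles are the only nonzero ones; the bound on those terms must come from the period estimates, not from comparison with $\binom{F}{k}$.

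A secondary gap: at $k=3$ your class sum is $\sum_{K\neq\{\mathrm{id}\}}|K|^{-1}=\frac{1}{n!}\sum_{\nu\neq(1^n)}z_\nu$, and since there are $p(n)$ classes this is not controlled by the maximal (transposition) term alone; the required estimate $\sum_{\nu\neq(1^n)}z_\nu=(2+o(1))\,n!/n^2$ is a genuine input (\Cref{lemma: asymptotic for sum of z}, imported from the literature), which you assert implicitly. Your sharpness arguments are correct and match the paper: all Burnside terms are nonnegative, so the transposition class forces $R(n,3)=\Theta(n^{-2})$, and $T(n,2)=p(n)-1$ against $\Lambda_n(2)\to\tfrac12$ rules out extending the domain to $k=2$. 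A minor slip: in the bulk, the extremal non-identity $\phi_{a,b}$ is not the fixed-point-free involution but a transposition pair, whose $\phi_{a,b}$ is an involution with $2(n-2)!$ fixed points; this is harmless there since every non-identity contribution is exponentially negligible for $k\ge\sqrt{n!}$, but it is symptomatic of the missing quantitative control of cycle structure that the paper supplies and your proposal does not.
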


%\begin{remark}
%    \Cref{thm:Asymptotic} and its proof suggest the possibility that, for each fixed $n$, the ratio $T(n, k)/\Lambda_n(k)$ may admit an expansion in powers of $1/n$. That is,
%    \[
%    T(n, k) = \Lambda_n(k) \bigg(1 + \sum_{j\geq 1} A^{(n,k)}_j \frac{1}{n^j} \bigg).
%    \]
%    Such asymptotic expansions frequently appear in the context of partition functions of Gibbs measures in statistical physics; see, for instance,~\cite{Geloun2022}. The coefficients in these expansions often have clean combinatorial interpretations. Establishing such an expansion for $T(n, k)$ would be an interesting direction for further research.
%\end{remark}

%Using Paley-Zygmund inequality and Theorem~\ref{thm:Asymptotic}, we conclude the following analog of the Besicovitch problem. \textcolor{red}{I'm not sure I understand why this result holds... Also, shouldn't the inequality be reversed?}\textcolor{blue}{You are right. We may have to drop this.}
%\begin{corollary}
%    \label{cor:No fixed point}
%   For $g=(g_1, g_2)\in \Sym{n}\times \Sym{n}$, let \[\mathcal{F}_{k}(g) = \{x\in \Omega(n, k): g\cdot x=x \}\]
%be the set of fixed points of $g$. Let $g$ be chosen uniformly at random from $\Sym{n}\times \Sym{n}$. If $3\leq k \leq n!-3$, then 
%\[\mathbb{P}(|\mathcal{F}_{k}(g)|=0) \geq  1- \frac{1}{n!^2}(1+o_n(1)) \;. \]
%\end{corollary}

\Cref{thm:Asymptotic} shows that $T(n, n!/2)\sim \Lambda_n := \frac{1}{n!^2}\binom{n!}{n!/2}$. The following theorem gives a general scaling limit for $T(n, k)$ near $k\approx n!/2$.
\begin{corollary}[Scaling limit]
   \label{thm - bulk asymptotic}
   For $x\in \mathbb{R}$, define $k_n(x) := \big\lfloor\frac{n!}{2}+ x\sqrt{\frac{n!}{4}} \big\rfloor$ and
    \[
    B_n(x) := \frac{T(n, k_n(x))}{\Lambda_n}\;,
    \]
    where $T(n, k):= 0$ if $k\notin [n!]$. Then, $B_n(x)\to \exp\!\left(-\frac{x^2}{2}\right)$, uniformly over $\mathbb{R}$, as $n\to \infty$.
\end{corollary}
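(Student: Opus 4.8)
The plan is to reduce \Cref{thm - bulk asymptotic} to a Gaussian approximation for a ratio of central binomial coefficients, letting \Cref{thm:Asymptotic} do the heavy lifting. Write $N = n!$ and $m_n(x) := k_n(x) - N/2$. Whenever $3 \le k_n(x) \le N - 3$, \Cref{thm:Asymptotic} gives $T(n, k_n(x)) = \Lambda_n(k_n(x))\bigl(1 + O(n^{-2})\bigr)$ with the error uniform in $x$, and after the common factor $n!^{-2}$ cancels between $\Lambda_n(k_n(x))$ and $\Lambda_n$,
\[
B_n(x) = \frac{\binom{N}{k_n(x)}}{\binom{N}{N/2}}\,\bigl(1 + O(n^{-2})\bigr).
\]
Because $\binom{N}{k_n(x)}/\binom{N}{N/2}\le 1$ by unimodality, the factor $1 + O(n^{-2})$ contributes only a uniform $O(n^{-2})$ additive error, so it suffices to prove $\binom{N}{k_n(x)}/\binom{N}{N/2} \to e^{-x^2/2}$ uniformly over those $x$ with $3\le k_n(x)\le N-3$, and to handle the few remaining $x$ directly in the tail step.

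Second, I would establish the bulk expansion of the binomial ratio by telescoping, which is cleaner than invoking Stirling's formula. For $m \ge 0$ one has $\binom{N}{N/2+m}/\binom{N}{N/2} = \prod_{i=1}^{m} \frac{N/2 - i + 1}{N/2 + i}$, and expanding each logarithm $\log\bigl(1 - \tfrac{2(i-1)}{N}\bigr) - \log\bigl(1 + \tfrac{2i}{N}\bigr)$ to second order and summing yields
\[
\log\frac{\binom{N}{N/2+m}}{\binom{N}{N/2}} = -\frac{2m^2}{N} + O\!\Bigl(\frac{m^2}{N^2} + \frac{m^4}{N^3}\Bigr),
\]
with the symmetric statement for $m < 0$. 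Since $N = n!$ is even and $m_n(x) = x\sqrt{N}/2 + O(1)$ (the $O(1)$ coming from the floor), substituting gives $-2 m_n(x)^2/N = -x^2/2 + O(|x|/\sqrt N)$ while the remainder is $O((x^2+x^4)/N)$. Hence, for $|x| \le R_n$ with any $R_n \to \infty$ satisfying $R_n = o(N^{1/4})$,
\[
\frac{\binom{N}{k_n(x)}}{\binom{N}{N/2}} = \exp\!\Bigl(-\tfrac{x^2}{2} + O\bigl(\tfrac{x^4 + 1}{N} + \tfrac{|x|}{\sqrt N}\bigr)\Bigr)
\]
uniformly over $|x| \le R_n$. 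Taking, say, $R_n = N^{1/8}$ drives the error to $0$ uniformly, and since $e^{-x^2/2}\le 1$ we obtain $\sup_{|x|\le R_n}\bigl|B_n(x) - e^{-x^2/2}\bigr| \to 0$.

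Third, I would control the tails $|x| > R_n$ by showing both functions are uniformly small there. Trivially $\sup_{|x|>R_n} e^{-x^2/2} = e^{-R_n^2/2} \to 0$. For $B_n$, I would again use unimodality of $k \mapsto \binom{N}{k}$: whenever $R_n < |x|$ and $3 \le k_n(x) \le N - 3$, the bound $T(n, k_n(x)) \le \Lambda_n(k_n(x))\bigl(1 + O(n^{-2})\bigr)$ together with $\binom{N}{k_n(x)} \le \binom{N}{k_n(\operatorname{sgn}(x)R_n)}$ gives, via the bulk expansion evaluated at the single point $x = \pm R_n$, the estimate $B_n(x) \le e^{-R_n^2/2}\bigl(1 + o(1)\bigr) \to 0$. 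The six boundary indices $k \in \{0,1,2,N-2,N-1,N\}$ not covered by \Cref{thm:Asymptotic} all correspond to $|x|$ within $O(1/\sqrt N)$ of $\pm\sqrt N$; there $e^{-x^2/2} \le e^{-N/3}$, while $T(n,k) \le p(n)$ against the enormous $\Lambda_n = \binom{N}{N/2}/(n!)^2$ forces $B_n(x)$ to be negligible as well. Finally, for $|x|$ so large that $k_n(x) \notin \{0,\dots,N\}$ we have $B_n(x) = 0$ by convention while $e^{-x^2/2}$ stays below $e^{-N/3}$. Assembling the three regions yields $\sup_{x\in\mathbb{R}}\bigl|B_n(x) - e^{-x^2/2}\bigr| \to 0$.

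The main obstacle I anticipate is the uniformity bookkeeping in the second step: one must make the telescoped expansion valid \emph{uniformly} across the growing window $|x| \le R_n$, simultaneously controlling the quartic remainder, the $O(1)$ floor-induced shift in $m_n(x)$, and the uniform $O(n^{-2})$ multiplicative error supplied by \Cref{thm:Asymptotic}, and then verify that all of these vanish together on the chosen range. By contrast the tail analysis is comparatively soft, resting only on the unimodality of the binomial coefficients and on the sheer size of $\Lambda_n$ relative to the trivial orbit counts at the boundary.
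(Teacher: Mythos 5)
Your proof is correct, and it shares the paper's skeleton while executing the key analytic step differently. Both arguments begin identically: apply \Cref{thm:Asymptotic} to cancel the common factor $n!^{-2}$ and reduce $B_n(x)$ to the binomial ratio $\binom{n!}{k_n(x)}\big/\binom{n!}{n!/2}$ up to a uniform $1+O(n^{-2})$ factor. From there the paper simply cites the De Moivre--Laplace theorem and stops; you instead prove the Gaussian approximation by hand, telescoping $\binom{N}{N/2+m}/\binom{N}{N/2}=\prod_{i=1}^{m}\frac{N/2-i+1}{N/2+i}$ and expanding logarithms (your remainder $O(m^2/N^2+m^4/N^3)$ is right, including the difference-of-squares cancellation in the quadratic terms), which makes the argument self-contained. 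More substantively, you supply the uniformity analysis that the paper's one-line proof leaves implicit: the classical local De Moivre--Laplace estimate is uniform only in a central window (essentially $|x|=o(N^{1/6})$ in your normalization), so uniform convergence over all of $\mathbb{R}$ genuinely requires your three-regime split --- the bulk $|x|\le R_n=N^{1/8}$, the tail $|x|>R_n$ handled via unimodality and evaluation at $x=\pm R_n$, and the boundary/out-of-range indices $k\in\{0,1,2,N-2,N-1,N\}$ and $k_n(x)\notin\{0,\dots,N\}$ where \Cref{thm:Asymptotic} does not apply and you correctly fall back on $T(n,k)\le p(n)$ (using $T(n,2)=p(n)-1$ and the symmetry $T(n,k)=T(n,n!-k)$) against the enormous $\Lambda_n$. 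What the paper's route buys is brevity via a standard citation; what yours buys is elementarity and an honest treatment of the uniformity claim, at the cost of the bookkeeping you flagged. The details all check out (monotonicity of $k_n$ plus unimodality justifies $\binom{N}{k_n(x)}\le\binom{N}{k_n(\operatorname{sgn}(x)R_n)}$, and $N=n!$ even makes $N/2$ a legitimate index), so there is no gap.
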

\begin{proof}
The classical De Moivre--Laplace theorem states that
\begin{align*}
    \frac{\sqrt{\pi n!/2}}{2^{n!}} \binom{n!}{k_n(x)} =(1+o_n(1)) \exp\!\Big(\!-\frac{(k_n(x)-n!/2)^2}{n!/2}\Big).
\end{align*}
The result thus follows from \Cref{thm:Asymptotic} since $B_n(x) = \frac{T(n, k_n(x))}{T(n, k_n(0))} = (1+o_n(1))\binom{n!}{k_n(x)}\!\big/\!\binom{n!}{k_n(0)}$.\,
\end{proof}

\begin{figure}
\begin{tikzpicture}
  \node[anchor=center, inner sep=0](image) at (0,0){\includegraphics[width=\linewidth]{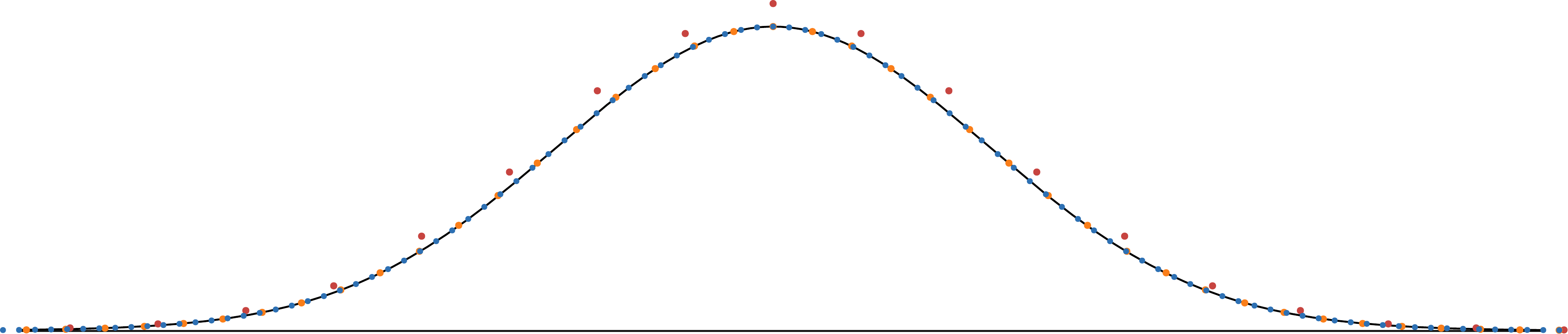}
  };
  % Add legend entries (adjust coordinates as needed)
 \begin{scope}[xshift=6cm, yshift=2cm]
      % Red dot
      \filldraw[red] (0,0) circle (1pt);
      \node[right] at (0.3,0) {$n=4$};

      % Blue dot
      \filldraw[orange] (0,-0.5) circle (1pt);
      \node[right] at (0.3,-0.5) {$n=5$};

      % Green dot
      \filldraw[blue] (0,-1) circle (1pt);
      \node[right] at (0.3,-1) {$n=6$};
    \end{scope}
\end{tikzpicture}
\caption{Convergence of $B_n(x)\to \exp(-x^2/2)$.}
\end{figure}

As an immediate consequence of \Cref{thm - bulk asymptotic}, we obtain the following remarkable result. 
\begin{corollary}[Asymptotic log-concavity of $T(n, k_n(x))$ in the bulk]\label{cor - log-concavity}
For any real numbers $x$ and $y$, we have  
\[ \lim_{n\to \infty} \frac{T(n, k_n(x))^2}{T(n, k_n(x+y)) T(n, k_n(x-y))} = e^{y^2} \geq 1\;.\]
\end{corollary}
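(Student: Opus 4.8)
The plan is to read this off directly from the scaling limit in \Cref{thm - bulk asymptotic}, exploiting the cancellation of the normalizing constant $\Lambda_n$. First, for each fixed real $t$ I would rewrite $T(n, k_n(t)) = \Lambda_n B_n(t)$, which is merely the definition of $B_n$. Applying this with $t = x$, $t = x+y$, and $t = x-y$ and substituting into the ratio, the four factors of $\Lambda_n$ cancel completely, leaving
\[
\frac{T(n, k_n(x))^2}{T(n, k_n(x+y))\, T(n, k_n(x-y))} = \frac{B_n(x)^2}{B_n(x+y)\, B_n(x-y)}\,.
\]

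Next, I would pass to the limit using \Cref{thm - bulk asymptotic}, which gives $B_n(t) \to \exp(-t^2/2)$ for each of the three fixed arguments. The only point requiring a word of justification is that we are taking the limit of a quotient, so the denominator must not vanish: since $\exp(-(x+y)^2/2)$ and $\exp(-(x-y)^2/2)$ are strictly positive, for all sufficiently large $n$ both $B_n(x+y)$ and $B_n(x-y)$ are bounded away from $0$. Hence the right-hand side is well-defined for large $n$ and converges to the ratio of the three limits.

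Finally, I would compute that limit explicitly. The exponents combine via the identity $(x+y)^2 + (x-y)^2 = 2x^2 + 2y^2$, so
\[
\frac{\exp(-x^2)}{\exp\!\big(-\tfrac{1}{2}[(x+y)^2 + (x-y)^2]\big)} = \frac{\exp(-x^2)}{\exp(-(x^2+y^2))} = \exp(y^2)\,,
\]
and $\exp(y^2) \geq 1$ because $y^2 \geq 0$. There is no genuine obstacle here: all the analytic content lives in \Cref{thm - bulk asymptotic}, and what remains is the cancellation of $\Lambda_n$ together with an elementary algebraic identity. The only care needed is the well-definedness of the quotient for large $n$, which is guaranteed by the strict positivity of the Gaussian limit.
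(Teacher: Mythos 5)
Your proof is correct and takes essentially the same route as the paper: both divide numerator and denominator by the appropriate powers of $\Lambda_n$ to reduce the ratio to $B_n(x)^2/\bigl(B_n(x+y)\,B_n(x-y)\bigr)$, then apply \Cref{thm - bulk asymptotic} and the identity $(x+y)^2+(x-y)^2=2x^2+2y^2$ to get $e^{y^2}$. Your additional observation that the denominator stays bounded away from zero for large $n$ is a small point of rigor the paper leaves implicit, but it does not change the argument.
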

\begin{proof}
A direct computation reveals that 
\begin{align*}
    \lim_{n\to \infty} \frac{T(n, k_n(x))^2}{T(n, k_n(x+y)) T(n, k_n(x-y))} &= \lim_{n\to \infty} \frac{\frac{T(n, k_n(x))^2}{\Lambda_n^2}}{\frac{T(n, k_n(x+y))}{\Lambda_n} \frac{T(n, k_n(x-y))}{\Lambda_n}}= \frac{e^{-x^2}}{e^{-x^2-y^2}} = e^{y^2}.
\end{align*}

%which proves Corollary~\ref{cor - log-concavity}.
\end{proof}

%For any $k$, we have a trivial lower bound $\frac{1}{n!^2}\binom{n!}{k}$ on the number of orbits. %This suggests that for $k\geq 3$, the probability that a random pair of permutation $(g_1, g_2)$ does not fix any $k$-set is $1-o_n(1)$. This is reminiscent of the famous Besicovitch problem\cite{Besicovitch1935} about the probability that a random permutation does not fix any $k$-subset $[n]$. This problem has deep connections to number theory and we refer the readers to~\cite{arratia1997random, Ford2022Cycle, ford2006integers,eberhard2016permutations} for more details.

%%%%%%%%%%%%%%%%%%%%%%%%%%
\subsection{Outline of the paper}

To estimate $T(n,k)$, we reduce to counting fixed $k$-sets under the action of $(g_1, g_2)\in \Sym{n} \times \Sym{n}$ via Burnside’s lemma.
This leads us in \Cref{sec:Proof of General action} to finding the number of $k$-sets fixed under some group action. In particular, \Cref{thm:GeneralAction} gives a formula for this. This result is of independent interest and it generalizes \cite[Theorem 4.1]{diaconis2008fixed}. 

Using this, we obtain a formula for $T(n,k)$ that depends on the number of permutations having cycle type $\nu^m$, which is defined as the cycle type of the permutation $g^m$, where $g$ is any permutation of cycle type $\nu$. In \Cref{lem - ratio}, we obtain a bound on this in terms of the number of permutations with cycle type $\nu$. This result may also be of independent interest.  

%The rest of the paper is essentially dedicated to the proof of \Cref{thm:Asymptotic}. 
In \Cref{sec: Aux}, we establish some preliminary lemmas which are crucial to the proof of our main result. 
%In particular, we non-trivially relate the number of permutations having cycle type $\nu^m$ to the number of those having cycle type $\nu$ in \Cref{lem - ratio}, which may also be a problem of independant interest. Having established these auxiliary results, 
We then finally prove \Cref{thm:Asymptotic} in \Cref{sec - sharp_upperbound}. 
%The proof is technical and relies on delicate estimations of several subcases of \Cref{cor - two-side}. 

% In Section~\ref{sec:Proof of General action}, we first prove Theorem~\ref{thm:GeneralAction}. The rest of the paper is essentially about the proof of Theorem~\ref{thm:Asymptotic}, which relies on several other propositions that in turn assume Lemma~\ref{lem-partition} and Proposition~\ref{prop - ratio}. The proof of Theorem~\ref{thm:Asymptotic} appears in Section~\ref{sec - sharp_upperbound}. Finally, in Section~\ref{sec: Aux} we prove Lemma~\ref{lem-partition} and Proposition~\ref{prop - ratio}.  

%%%%%%%%%%%%

\section{Counting Fixed \texorpdfstring{$k$}{k}-Subsets under Group Actions}

\subsection{A general result}

Recall our main objective to count the number $T(n,k)$ of orbits of the action of $\Sym{n}\times \Sym{n}$ on the collection of $k$-subsets of $\Sym{n}$. To this end, a natural approach is to use the classical \emph{Burnside's lemma}, also known as the \emph{orbit-counting theorem}. 
Now, if $\mathcal{G}$ is a group acting on a set $X$, then Burnside's lemma states that the number of orbits satisfies
\begin{equation}\label{eqn:Burnside}
    \# \text{Orbits} = \frac{1}{|\mathcal{G}|} \sum_{g \in \mathcal{G}} |F_g|,
\end{equation}
where $F_g := \{x \in X : g \cdot x = x\}$ is the set of fixed points of $g$. 
Hence, the problem of counting fixed $k$-subsets under group actions essentially comes down to counting the number of fixed points in $X$ under the action $g\in\mathcal{G}$, for all possible $g\in\mathcal{G}$. 
To this end, we first consider a more general setting. Hence, let $\mathcal{G}$ be a finite group acting on a finite set $X$ via automorphisms; that is, the map
\[
x \mapsto g \cdot x
\]
is an automorphism of $X$ for each $g \in \mathcal{G}$. Such an action naturally extends to an action of $\mathcal{G}$ on $\binom{X}{k}$, the family of all $k$-element subsets of $X$. 
Moreover, for $g \in \mathcal{G}$ and $k \in \mathbb{N}$, let
\[
\binom{X}{k}^{\!\raisebox{-2pt}{$\scriptstyle g$}} := \left\{x \in \binom{X}{k} : g \cdot x = x \right\}
\]
denote the collection of fixed points of $g$ in $\binom{X}{k}$. The following theorem provides an explicit formula for the cardinality of this set in terms of the sets
\begin{equation}\label{eq - def_Tg(p)}
    T_g(j) := \left\{x \in X : g^j \cdot x = x \,\text{ and }\, g^i \cdot x \neq x \text{ for all } 1 \leq i < j \right\},
\end{equation}
which consist of all elements of $X$ that are fixed by $g^j$ but not by any $g^i$ for $1 \leq i < j$.

\begin{theorem}[Number of $k$-subsets fixed by $g$]
\label{thm:GeneralAction}
    Let $X$ be a finite set and let $\mathcal{G}$ be a group acting on $X$ by automorphisms. Then
    \[
     \left|\binom{X}{k}^{\!\raisebox{-2pt}{$\scriptstyle g$}}\right| = \sum_{\lambda \vdash k} \prod_{j=1}^{k} \binom{|T_{g}(j)|/j}{m_j(\lambda)},
    \]
    where the sum is over all integer partitions $\lambda$ of $k$ and $m_j(\lambda)$ denotes the number of parts of size $j$ in $\lambda$. 
    Here, we use the convention that $\binom{\alpha}{\beta}=0$ if $\alpha<\beta$. 
    % \[|T_{g}(j)| = \sum_{m\mid j}|\widetilde{T}_{g}(m)|\mu\big(\frac{j}{m}\big)\;,\]
    % where $\mu$ is the Mobius function introduced in~\eqref{eq:def_Mobius}.
\end{theorem}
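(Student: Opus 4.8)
The plan is to establish a bijection between the $g$-fixed $k$-subsets of $X$ and the collections of orbits of the cyclic group $\langle g\rangle$ whose sizes sum to $k$, and then to count the latter by sorting orbits according to their size.

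First I would observe that a subset $x\subseteq X$ satisfies $g\cdot x=x$ if and only if $x$ is invariant under the entire cyclic group $\langle g\rangle$. Indeed, applying $g$ repeatedly to the identity $g\cdot x=x$ gives $g^i\cdot x=x$ for every $i\in\mathbb{Z}$, and conversely $\langle g\rangle$-invariance trivially implies $g$-invariance. Since $X$ is partitioned into the orbits of $\langle g\rangle$, a $\langle g\rangle$-invariant subset is exactly a union of such orbits. Hence the $g$-fixed $k$-subsets are precisely the unions of $\langle g\rangle$-orbits of total cardinality $k$.

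Next I would identify how many orbits there are of each size. By the definition in \eqref{eq - def_Tg(p)}, an element $x$ lies in $T_g(j)$ exactly when the smallest positive integer $i$ with $g^i\cdot x=x$ equals $j$, which is precisely the condition that the $\langle g\rangle$-orbit of $x$ has size $j$. Thus $T_g(j)$ is the disjoint union of all orbits of size exactly $j$; since each such orbit contributes $j$ elements, the number of orbits of size $j$ equals $|T_g(j)|/j$. (In particular this quantity is automatically a nonnegative integer.)

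Finally I would count directly. Specifying a union of orbits of total size $k$ amounts to deciding, for each $j\ge 1$, how many orbits of size $j$ to include; call this number $m_j$. The total-size constraint reads $\sum_{j\ge 1} j\,m_j=k$, and the choices for distinct values of $j$ are made independently, so the number of valid selections realizing a prescribed vector $(m_j)$ is $\prod_{j} \binom{|T_g(j)|/j}{m_j}$, where the convention $\binom{\alpha}{\beta}=0$ for $\alpha<\beta$ correctly suppresses any selection demanding more orbits of a given size than exist. The sequences $(m_j)$ with $\sum_j j\,m_j=k$ are in bijection with the integer partitions $\lambda\vdash k$ via $m_j=m_j(\lambda)$, and summing the product over all such partitions yields the claimed formula. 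There is no serious obstacle in this argument; the only points demanding care are the equivalence of $g$-invariance with $\langle g\rangle$-invariance and the identification of $|T_g(j)|/j$ as the count of orbits of size $j$, both of which are immediate once stated.
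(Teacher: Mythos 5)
Your proof is correct and takes essentially the same route as the paper: its equivalence classes $S_g(j)=T_g(j)/\!\sim$ are exactly your $\langle g\rangle$-orbits of size $j$, and its decomposition of the fixed $k$-sets by the cycle type $\lambda$ of the induced permutation $\pi\in\Sym{k}$ is the same as your decomposition by the multiplicity vector $(m_j)$ with $\sum_j j\,m_j=k$. The only cosmetic difference is that you bypass the bookkeeping with $\pi$ by arguing directly that $g$-invariance equals $\langle g\rangle$-invariance, so fixed sets are unions of orbits.
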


\iffalse
\begin{remark}
\label{remark:Mobius}
Instead of $T_g(j)$, it is often easier to estimate the size of the set
\[
\widetilde{T}_g(j) := \{x \in X : g^j \cdot x = x\}.
\]
Note that we trivially have $T_g(j) \subseteq \widetilde{T}_g(j)$. Moreover, the M\"obius inversion formula~\cite{bender1975applications} allows us to express $|T_g(j)|$ in terms of the cardinalities of the sets $\widetilde{T}_g(m)$:
\begin{equation}
\label{eq - T_general}
|T_g(j)| = \sum_{m \mid j} \mu(j/m) \, |\widetilde{T}_g(m)|,
\end{equation}
where $\mu$ denotes the classical M\"obius function
\begin{equation}
\label{eq:def_Mobius}
\mu(x) = 
\begin{cases}
1 & \text{if } x = 1, \\
(-1)^p & \text{if } x \text{ is a product of } p \text{ distinct primes}, \\
0 & \text{if } x \text{ is divisible by the square of a prime}.
\end{cases}
\end{equation}
% These observations shall prove to be useful in \Cref{ } to provide a proof of \Cref{cor - two-side}.
\end{remark}
\fi

We shall soon prove \Cref{thm:GeneralAction} in \Cref{sec:Proof of General action} and use it in \Cref{sec - two-sided} to obtain an explicit formula for $T(n,k)$ (\Cref{cor - two-side}). However, let us first look at an interesting special case, in which we retrieve a result from Diaconis, Fulman, and Guralnick.
\medskip

\begin{example}

Let $\mathcal{G} = \Sym{n}$ be the symmetric group, and let $X = [n] = \{1, \ldots, n\}$. The group $\mathcal{G}$ naturally acts on $[n]$ via the map $\sigma: i \mapsto \sigma(i)$. This action extends to the family of $k$-subsets of $[n]$, denoted $\binom{[n]}{k}$, by
\[
\sigma \cdot \{x_1, \ldots, x_k\} = \{\sigma(x_1), \ldots, \sigma(x_k)\}.
\]
Let $T_\sigma(j)$ be defined in the obvious way as in \eqref{eq - def_Tg(p)}. Then it is easily seen that $T_\sigma(j)$ is the set comprised of all the numbers in $[n]$ which belong to a $j$-cycles of $\sigma$. In particular,
\[
\frac{|T_\sigma(j)|}{j} = \, \text{number of $j$-cycles in } \sigma  \,=:c_j(\sigma).
\]
Let $\binom{[n]}{k}^{\smash{\sigma}}$ denote the number of $k$-subsets of $[n]$ fixed by $\sigma \in \Sym{n}$.\Cref{thm:GeneralAction} yields
\begin{equation}
\label{eqn:DFG_Formula}
\binom{[n]}{k}^{\!\raisebox{-2pt}{$\scriptstyle \sigma$}} = \sum_{\lambda \vdash k} \prod_{j = 1}^{k} \binom{c_j(\sigma)}{m_j(\lambda)},
\end{equation}
which is precisely \cite[Theorem 4.1]{diaconis2008fixed}.

A classical result of Goncharov\cite{Goncharev} states that for a uniformly random permutation $\sigma \in \Sym{n}$, and for fixed $k$, the joint distribution of the cycle counts $(c_j(\sigma))_{1 \leq j \leq k}$ converges, as $n \to \infty$, to a vector $(X_j)_{1 \leq j \leq k}$ of independent Poisson random variables with means $\mathbb{E}[X_j] = 1/j$. Using this fact together with~\eqref{eqn:DFG_Formula}, the authors further obtained the limiting distribution of $F_{\sigma}(k)$ as $n \to \infty$ for all fixed $k$; see~\cite[Theorem 4.1]{diaconis2008fixed}.
\end{example}

\subsection{Proof of \texorpdfstring{\Cref{thm:GeneralAction}}{Theorem 2.1}}
\label{sec:Proof of General action}

We now prove \Cref{thm:GeneralAction}. 
Let $g\in\mathcal{G}$ be given. Observe that $\widetilde{x} = \{x_1,\dots,x_k\}\in \binom{X}{k}^{\!g}$ if and only if there is a permutation $\pi\in \Sym{k}$ such that
\[g \cdot x_{i} = x_{\pi(i)}, \quad 1\leq i\leq k\;.\]
Suppose $\pi\in \Sym{k}$ is such a permutation and let $c=(i_1, \ldots, i_{t})$ be a cycle of length $t$ in $\pi$. We then have $g\cdot x_{i} = x_{\pi(i)}$ for $i=1,2,\dots,k$ if and only if
\[g \cdot x_{i_s} = x_{i_{s+1}}, \quad s\in [t]\,,\]
with the convention that $i_{t+1}=i_1$, for all cycles in $\pi$. Note that the permutation $\pi$ depends on how we enumerate the elements in $x$. However, the cycle type of permutation $\pi$ is independent of the choice of enumeration. Recall that the set of cycle structures of permutations $\pi\in \Sym{k}$ is in one-to-one correspondence with the set of integer partitions $\lambda\vdash k$, where the parts of the partition $\lambda$ correspond to the length of the cycles of $\pi$. For an integer partition $\lambda \vdash k$ and $g\in \mathcal{G}$, let us define 
\[
\binom{X}{k}^{\!\raisebox{-2pt}{$\scriptstyle g$}}_{\!\raisebox{2pt}{$\scriptstyle \lambda$}} := \left\{\widetilde{x}\in \binom{X}{k}^{\!\raisebox{-2pt}{$\scriptstyle g$}} : [g, \widetilde{x}] = \lambda\right\}\;,
\]
where the notation $[g, \widetilde{x}] = \lambda$ means that for some (and hence any) enumeration of the set $\widetilde{x}$, the induced permutation on $[k]$ by the action of $g$ on $\widetilde{x}$ has a cycle structure given by $\lambda$. With this notation, we readily obtain
\begin{equation}
\label{eqn:DecompositionInLambda}
    \left|\binom{X}{k}^{\!\raisebox{-2pt}{$\scriptstyle g$}} \right| = \sum_{\lambda\vdash k} \left|\binom{X}{k}^{\!\raisebox{-2pt}{$\scriptstyle g$}}_{\!\raisebox{2pt}{$\scriptstyle \lambda$}} \right|\;.
\end{equation}
On the set $T_{g}(p) \subseteq X$, defined in \eqref{eq - def_Tg(p)}, we consider the equivalence relation $x_1 \sim x_2$ if $x_1 = g^s \cdot x_2$ for some $s\in \mathbb{Z}$. Let
\[
S_{g}(p) = T_{g}(p)/\sim
\]
denote the set of equivalence classes in $T_{g}(p)$ under this relation. 
By definition, every $x \in T_g(p)$ satisfies $g^p \cdot x = x$ and $g^i \cdot x \neq x$ for all $1 \leq i < p$. This implies that the orbit of $x$ under the action of $g$ consists of exactly $p$ distinct elements, and that each equivalence class contains precisely $p$ elements. Therefore, we have the identity $|S_g(p)| = |T_g(p)|/p.$ 
We now argue that
\begin{equation}
    \label{eqn:FixedwithLambda2}
    \left|\binom{X}{k}^{\!\raisebox{-2pt}{$\scriptstyle g$}}_{\!\raisebox{2pt}{$\scriptstyle \lambda$}} \right| = \prod_{j=1}^k \binom{|S_{g}(j)|}{m_j(\lambda)} = \prod_{j=1}^k \binom{|T_{g}(j)|/j}{m_j(\lambda)}\;.
\end{equation}
To this end, observe that $\widetilde{x} \in \binom{X}{k}^{\hspace{-.5pt}\raisebox{-1pt}{$\scriptstyle g$}}_{\!\raisebox{2pt}{$\scriptstyle \lambda$}}$ if and only if $\widetilde{x}$ is a disjoint union of $g$-orbits (or cycles) of prescribed lengths, that is,
\[
\widetilde{x} = \bigsqcup_{j=1}^{k} \bigsqcup_{m=1}^{m_{j}(\lambda)} C_g(j, m),
\]
where each $C_g(j, m)$ is a $g$-orbit of length $j$ of the form $(x_1, \ldots, x_j)$ satisfying $g \cdot x_t = x_{t+1}$, with indices taken modulo $j$ (i.e., $x_{j+1} = x_1$). 
Each such cycle essentially corresponds to an equivalence class in $S_g(j)$. Moreover, since the cycles are disjoint, choosing $m_j(\lambda)$ of them corresponds to selecting $m_j(\lambda)$ distinct elements from $S_g(j)$, which can be done in $\binom{|S_g(j)|}{m_j(\lambda)}$ ways. Hence, taking the product over $j = 1,2,\dots,k$ gives the formula in \eqref{eqn:FixedwithLambda2}, and completes the proof.

\subsection{Two sided multiplication action}\label{sec - two-sided}
Let us now return to the problem of main interest to this paper, namely computing $T(n,k)$, which corresponds to the case $\mathcal{G}=\Sym{n}\times \Sym{n}$, $X=\Sym{n}$ in \Cref{thm:GeneralAction}. 
For $g=(g_1, g_2)\in \Sym{n}\times\Sym{n}$, recall that we have 
\begin{equation}\label{eq - T}
    \begin{gathered}
    T_g(j)=:T_{g_1, g_2}(j) = \{\sigma\in \Sym{n}: g_1^j\sigma g_2^j=\sigma \,\text{ and }\, g_1^{i}\sigma g_2^{i}\neq \sigma \,\text{ for all }\, 1\leq i< j\}\,,\\
    \widetilde{T}_g(j)=:\widetilde{T}_{g_1, g_2}(j) = \{\sigma\in \Sym{n}: g_1^j\sigma g_2^j=\sigma\}\,.
\end{gathered}
\end{equation}
We claim that $|T_{g_1, g_2}(j)|$ (resp. $|\widetilde{T}_{g_1, g_2}(j)|$) is completely determined by the cycle type of $g_1$ and $g_2$. Indeed, let $(g_1, g_2)$ and $(g_1', g_2')$ be two elements of $\Sym{n} \times \Sym{n}$ with the same respective cycle types $(\nu_1, \nu_2)$. Then it follows that $g_1$ and $g_1'$ are conjugate, as are $g_2$ and $g_2'$. Hence, choose $h_1, h_2 \in \Sym{n}$ such that
\[
h_1 g_1 h_1^{-1} = g_1' \quad\text{ and }\quad  h_2 g_2 h_2^{-1} = g_2',
\]
and observe that for any fixed $j$, the map $\sigma \mapsto h_1^j \sigma h_2^{-j}$ defines a bijection between $T_{g_1, g_2}(j)$ and $T_{g_1', g_2'}(j)$ (resp. $\widetilde{T}_{g_1, g_2}(j)$ and $\widetilde{T}_{g_1', g_2'}(j)$). Hence, it follows that $\lvert T_{g_1, g_2}(j) \rvert$ (resp. $\lvert \widetilde{T}_{g_1, g_2}(j) \rvert$) depends only on the pair $(\nu_1, \nu_2)$, and not on the specific permutations with these cycle types. By abuse of notation, we shall thus denote this quantity by $\lvert T_{\nu_1, \nu_2}(j) \rvert$ (resp. $\lvert \widetilde{T}_{g_1, g_2}(j) \rvert$).

For a permutation $g$ with cycle type $\nu$, denote by $\nu^m$ the cycle type associated to the permutation $g^m$. Moreover, define in the usual way the quantity $z_{\nu} = \prod j^{m_j(\nu)} m_j(\nu)!$. It is a standard fact that the number of permutations $\sigma \in \Sym{n}$ with cycle type $\nu$ is given by $n! / z_{\nu}$. 
Combining this fact with \Cref{thm:GeneralAction}, Burnside's lemma~\eqref{eqn:Burnside} we obtain the following orbit-counting formula for the two-sided action.

% \begin{align*}
%     T(n,3) &=\sum_{\nu\vdash n} \frac{1}{z_{\nu}} \left( \frac{z_{\nu}^2-6z_{\nu}}{6} +\frac{z_{\nu^2}}{2} \right)   +  \frac{1}{3}\sum_{(\nu_1, \nu_2)\vdash n} \frac{z_{\nu_1^3} \delta_{\nu_1^3,\nu_2^3}}{z_{\nu_1} z_{\nu_2}}  \\
%     %
%     &= \frac{1}{2} \sum_{\nu\vdash n} \left( \frac{z_{\nu}}{3} +\frac{z_{\nu^2}}{z_{\nu}} \right)   +  \frac{1}{3}\sum_{(\nu_1, \nu_2)\vdash n} \frac{z_{\nu_1^3} \delta_{\nu_1^3,\nu_2^3}}{z_{\nu_1} z_{\nu_2}} - p(n)  \\
%     %
%     &=  \sum_{\nu\vdash n} \left( \frac{z_{\nu}}{6} +\frac{z_{\nu^2}}{2z_{\nu}} + \frac{z_{\nu^3}}{3z_{\nu}^2} \right) +  \frac{1}{3}\sum_{\nu_1\neq \nu_2\vdash n} \frac{z_{\nu_1^3} \delta_{\nu_1^3,\nu_2^3}}{z_{\nu_1} z_{\nu_2}} - p(n)
% \end{align*}

\begin{corollary}\label{cor - two-side}
    Let $n\geq 0$ and $0\leq k \leq n!$ be integers. Then
    \begin{equation*}
    % \label{eqn: Orbit formula for two-sided action}
        T(n, k) = \sum_{(\nu_1, \nu_2)\vdash n} \frac{1}{z_{\nu_1} z_{\nu_2}} \sum_{\lambda \vdash k} \prod_{j=1}^{k} \binom{|T_{\nu_1, \nu_2}(j)|/j}{m_j(\lambda)}\,,
    \end{equation*}
    where $|T_{\nu_1, \nu_2}(j)| = \sum_{m|j} \mu(j/m) z_{\nu_1^m} \delta_{\nu_1^m,\nu_2^m}$ and $\delta_{i,j}$ is the Kronecker delta symbol.
\end{corollary}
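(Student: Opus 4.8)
The plan is to combine Burnside's lemma with \Cref{thm:GeneralAction} for the overall shape of the formula, and then to supply the two ingredients special to the two-sided action: the reduction of $|T_{g_1,g_2}(j)|$ to cycle-type data (already recorded in the text preceding the statement) and the explicit evaluation of the ``unreduced'' counts $|\widetilde T_{g_1,g_2}(j)|$.

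First I would apply Burnside's lemma \eqref{eqn:Burnside} with $\mathcal G=\Sym n\times\Sym n$, so that $|\mathcal G|=n!^2$, giving
\[
T(n,k)=\frac{1}{n!^2}\sum_{(g_1,g_2)\in\Sym n\times\Sym n}\Bigl|\tbinom{\Sym n}{k}^{(g_1,g_2)}\Bigr|,
\]
and substitute the formula of \Cref{thm:GeneralAction} for each summand. Since the text already established that $|T_{g_1,g_2}(j)|$ depends only on the pair of cycle types $(\nu_1,\nu_2)$, the inner sum $\sum_{\lambda\vdash k}\prod_{j}\binom{|T_{\nu_1,\nu_2}(j)|/j}{m_j(\lambda)}$ is constant across all pairs of that type. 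There are exactly $\frac{n!}{z_{\nu_1}}\cdot\frac{n!}{z_{\nu_2}}$ such pairs, using the standard count $n!/z_\nu$ for permutations of cycle type $\nu$. Grouping the outer sum by cycle type and cancelling the factor $n!^2$ then yields the displayed main formula; this part is pure bookkeeping.

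The heart of the argument is the evaluation of $|T_{\nu_1,\nu_2}(j)|$, and I would reach it through $|\widetilde T_{\nu_1,\nu_2}(j)|$, the number of $\sigma$ with $g_1^j\sigma g_2^j=\sigma$. Rewriting this as $\sigma\,g_2^{j}\,\sigma^{-1}=g_1^{-j}$ turns the equation into a conjugacy problem: a solution $\sigma$ exists iff $g_2^{j}$ and $g_1^{-j}$ are conjugate, i.e. iff they have the same cycle type; since inversion preserves cycle type, this condition is precisely $\nu_1^{j}=\nu_2^{j}$, that is $\delta_{\nu_1^j,\nu_2^j}=1$. When they are conjugate, the solution set is a coset of the centralizer of $g_2^{j}$, whose size is $z_{\nu_2^{j}}=z_{\nu_1^{j}}$. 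Hence $|\widetilde T_{\nu_1,\nu_2}(j)|=z_{\nu_1^{j}}\,\delta_{\nu_1^{j},\nu_2^{j}}$. This conjugacy/centralizer count is the one genuinely nontrivial step, and the point to be careful about is exactly that $g_1^{-j}$ and $g_1^{j}$ share a cycle type, which makes the resulting condition symmetric in the two coordinates.

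Finally I would pass from $\widetilde T$ to $T$ by the partition-by-period identity $\widetilde T_{g}(j)=\bigsqcup_{m\mid j}T_{g}(m)$: each $x$ has a minimal period $o(x)$ under the cyclic action of $g$, and $g^{j}\cdot x=x$ holds iff $o(x)\mid j$ (by a $\gcd$ argument), so the sets $T_g(m)$ for $m\mid j$ partition $\widetilde T_g(j)$. Taking cardinalities gives $|\widetilde T_{g}(j)|=\sum_{m\mid j}|T_{g}(m)|$, and Möbius inversion produces
\[
|T_{\nu_1,\nu_2}(j)|=\sum_{m\mid j}\mu(j/m)\,|\widetilde T_{\nu_1,\nu_2}(m)|=\sum_{m\mid j}\mu(j/m)\,z_{\nu_1^{m}}\,\delta_{\nu_1^{m},\nu_2^{m}},
\]
which is the stated expression for $|T_{\nu_1,\nu_2}(j)|$ and completes the proof.
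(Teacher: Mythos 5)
Your proof is correct and takes essentially the same route as the paper's: Burnside's lemma combined with \Cref{thm:GeneralAction}, grouping the sum by cycle types via the count $n!/z_\nu$, M\"obius inversion over divisors of $j$, and the evaluation $|\widetilde{T}_{\nu_1,\nu_2}(m)| = z_{\nu_1^m}\,\delta_{\nu_1^m,\nu_2^m}$. If anything, your explicit conjugacy/centralizer computation (rewriting $g_1^m\sigma g_2^m=\sigma$ as $\sigma g_2^m\sigma^{-1}=g_1^{-m}$) is more complete than the paper's shortcut of taking $g_1=g_2$, since it handles directly the pairs with $\nu_1\neq\nu_2$ but $\nu_1^m=\nu_2^m$, which the paper leaves implicit.
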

\begin{proof}
Using \Cref{thm:GeneralAction} and Burnside's lemma~\eqref{eqn:Burnside}, we obtain
\begin{equation*}
    T(n, k) = \sum_{(\nu_1, \nu_2)\vdash n} \frac{1}{z_{\nu_1} z_{\nu_2}} \sum_{\lambda \vdash k} \prod_{j=1}^{k} \binom{|T_{\nu_1, \nu_2}(j)|/j}{m_j(\lambda)}\;.
\end{equation*}
Finally observe that the classical M\"obius inversion~\cite{bender1975applications} yields
\[
|T_{\nu_1, \nu_2}(j)| = \sum_{m|j} \mu(j/m) |\widetilde{T}_{\nu_1,\nu_2}(m)|.
\]
Finally, note that $\widetilde{T}_{\nu_1, \nu_2}(m)\neq \emptyset$ if and only if $\nu_1^m=\nu_2^m$. Now suppose $\nu_1=\nu_2=\nu$. We know that for any $g_1, g_2\in \Sym{n}$ with cycle types $\nu$, we have 
\[|\widetilde{T}_{\nu_1, \nu_2}(m)| = \{\sigma\in \Sym{n}: g_1^m\sigma g_2^m=\sigma\}\;.\]
Taking $g_1=g_2$ immediately shows that $|\widetilde{T}_{\nu_1, \nu_2}(m)|=z_{\nu^m}$. This completes the proof.
%Hence, let $g_1,g_2\in \Sym{n}$ be two permutations of cycle types $\nu_1$ and $\nu_2$, respectively, and suppose that $g_1^j$ and $g_2^j$ have \emph{distinct} cycle type (i.e., $\nu_1^j \neq \nu_2^j$). Then it follows that $T_{\nu_1, \nu_2}(j) = \emptyset$, since otherwise we would have $g_1^j \sigma g_2^j = \sigma$, which would imply that $g_1^j$ and $g_2^j$ are similar and thus have the same cycle type, a contradiction. Hence, $|T_{\nu_1, \nu_2}(j)| =0$ if $\nu_1^j\neq \nu_2^j$.  
%
%Otherwise, if $\nu_1^j=\nu_2^j$, then $g_1^j$ is similar to $g_2^j$ (and thus to $g_2^{-j}$, which has the same cycle type as $g_2^j$), and $\widetilde{T}_{\nu_1,\nu_2}(j)$ consists precisely of all the permutations $\sigma\in\Sym{n}$ which serves as conjugating elements between $g_1^j$ and $g_2^{-j}$. It is easy to show that there are $z_{\nu_1^j} = z_{\nu_2^j}$ of these elements, and thus that $|\widetilde{T}_{\nu_1,\nu_2}(j)| = z_{\nu_1^j} $. 
%Consequently, we have in the general case $|\widetilde{T}_{\nu_1,\nu_2}(m)| = z_{\nu_1^m}  \delta_{\nu_1^m,\nu_2^m}$, which finally yields
%\[
%|T_{\nu_1, \nu_2}(j)| = \sum_{m|j} \mu(j/m) z_{\nu_1^m} \delta_{\nu_1^m,\nu_2^m}
%\]
%and concludes the proof.
\end{proof}

\section{Estimation of \texorpdfstring{$|T_{\nu_1,\nu_2}(j)|$}{|T(v₁,v₂)(j)|} and Preliminary Results}

\label{sec: Aux}

In light of \Cref{cor - two-side}, we naturally need a good estimate for $|T_{\nu_1, \nu_2}(j)|$ in order to get a good asymptotic of $T(n, k)$. To this end, we prove some estimates on $z_{\nu^m}$. Note that we trivially have $z_{\nu^m} \geq z_\nu$ for all $m\in \mathbb{N}$ and all integer partition $\nu$. %, since 
%\[
%z_{\nu^m} = |\{ \sigma\in\Sym{n} : g^m \sigma g^m=\sigma \}|,
%]
%where $g$ has cycle type $\nu$, and $g^1 \sigma g^1=\sigma$ implies that $g^m \sigma g^m=\sigma$ for $m> 1$.
However, we need a reverse inequality, and this is the content of the following result, which may be of independent interest.

\begin{proposition}\label{lem - ratio}
    Let $p$ be a prime number. Then we have $z_{\nu^p} \leq n!p^{\frac{1-n}{p}}z_\nu$.
\end{proposition}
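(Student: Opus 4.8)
The plan is to work directly with the identity $z_\nu = \prod_{j\ge 1} j^{m_j(\nu)} m_j(\nu)!$ (equivalently, the order of the centralizer in $\Sym{n}$ of a permutation of type $\nu$) and to track how the $p$-th power map acts on cycle types. First I would record the elementary consequence of $p$ being prime: a cycle of length $j$ with $p\nmid j$ is carried by $g\mapsto g^p$ to a single cycle of length $j$, whereas a cycle of length $j$ with $p\mid j$ splits into exactly $p$ cycles of length $j/p$. Translating this into multiplicities yields, for every $\ell\ge 1$,
\[
m_\ell(\nu^p) = \mathbf{1}_{\{p\,\nmid\,\ell\}}\, m_\ell(\nu) + p\, m_{p\ell}(\nu),
\]
which is the only structural input the argument needs.

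Next I would exploit the fact that this formula couples the length $\ell$ only to the length $p\ell$. Grouping all positive integers into the $p$-chains $\{\,p^i\ell : i\ge 0\,\}$ indexed by $\ell$ coprime to $p$, the multiplicities of $\nu^p$ inside a chain depend only on those of $\nu$ inside the same chain. Since $z_\nu$ and $z_{\nu^p}$ both factor as products over cycle lengths, the ratio factors over chains:
\[
\frac{z_{\nu^p}}{z_\nu} = \prod_{p\,\nmid\,\ell} R_\ell ,
\]
where $R_\ell$ involves only the numbers $c_i := m_{p^i\ell}(\nu)$, and the chain weights $n_\ell := \sum_{i\ge 0} p^i \ell\, c_i$ satisfy $\sum_\ell n_\ell = n$. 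Writing $d_0 = c_0 + p c_1$ and $d_i = p c_{i+1}$ for the multiplicities of $\nu^p$ along the chain, $R_\ell$ becomes an explicit product of powers of $p$ and $\ell$ times a ratio of factorials.

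The technical heart of the argument, and the step I expect to be the \emph{main obstacle}, is the per-chain estimate
\[
R_\ell \le n_\ell!\; p^{(1-n_\ell)/p}.
\]
This is sharp: a single $p$-cycle (the chain $\ell=1$, $c_1=1$) already attains equality, so there is essentially no slack to waste, and the bound must be proved by a careful comparison of the factorial factors $(c_0+pc_1)!/(c_0!\,c_1!)$ and $\prod_{k\ge 2}(p c_k)!/c_k!$ against $n_\ell!$, absorbing the surplus powers of $p$ and $\ell$. I would establish it by induction on the number of nonzero $c_i$ (or on $n_\ell$), verifying the base cases by hand and using standard factorial and binomial inequalities in the inductive step.

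Finally I would assemble the pieces. Multiplying the per-chain bounds gives
\[
\frac{z_{\nu^p}}{z_\nu} \le \Big(\prod_{p\,\nmid\,\ell} n_\ell!\Big)\, p^{(c-n)/p},
\]
where $c$ denotes the number of nonempty chains. It then remains to compare this with the target $n!\,p^{(1-n)/p}$, that is, to show $\big(\prod_\ell n_\ell!\big)\,p^{(c-1)/p}\le n!$. This follows from the multinomial lower bound $n!/\prod_\ell n_\ell! = \binom{n}{\,n_{\ell_1},\dots,n_{\ell_c}} \ge 2^{\,c-1}$ (valid since the parts are positive and sum to $n\ge c$) together with the elementary inequality $p^{1/p} < 2$, which holds for every prime $p$. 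Combining these two facts closes the argument and yields $z_{\nu^p} \le n!\,p^{(1-n)/p} z_\nu$.
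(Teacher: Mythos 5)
Your structural reduction is correct as far as it goes: the splitting rule $m_\ell(\nu^p)=\mathbf{1}_{\{p\,\nmid\,\ell\}}m_\ell(\nu)+p\,m_{p\ell}(\nu)$ is right for prime $p$, the ratio $z_{\nu^p}/z_\nu$ does factor over the chains $\{p^i\ell : i\ge 0\}$, and your assembly step is sound: the multinomial bound $n!/\prod_\ell n_\ell!\ge 2^{c-1}$ (valid since the $c$ chain weights are positive) together with $p^{1/p}\le 3^{1/3}<2$ does convert the per-chain bounds into the stated proposition. The genuine gap is the per-chain estimate $R_\ell\le n_\ell!\,p^{(1-n_\ell)/p}$ itself, which you correctly flag as the technical heart but never prove. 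Observe that this estimate \emph{is} the proposition, restricted to partitions supported on a single chain; so the chain decomposition, while conceptually clean, does not reduce the difficulty of the inequality at all --- it only shows that the general case follows from the single-chain case, with room to spare. Saying it would follow ``by induction\dots using standard factorial and binomial inequalities'' is precisely where all the content lies: your sketch gives no indication of how the inductive step would absorb the factorial ratios $(c_0+pc_1)!/(c_0!\,c_1!)$ and $\prod_{j\ge 2}(pc_j)!/c_j!$ against the surplus powers of $p$ and $\ell$, and this is exactly the delicate part. Moreover, your motivating sharpness claim is false: for a single $p$-cycle one computes $R=(p-1)!$ while the bound gives $(p-1)!\,p^{1/p}$, so equality is \emph{not} attained there (it is attained only at $n_\ell=1$); the slack is small but real, and tracking it is what any induction must do.

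For comparison, the paper proves the inequality by strong induction on $n$, peeling off the largest part $k$ of $\nu$ and splitting into the cases $k<p$ (where primality forces $\nu^p=\nu$), $\gcd(k,p)=1$, and $p\mid k$ (with sub-cases according to whether $p\mid k/p$), each case closed using the monotonicity of $k\mapsto p^{k/p}/k!$ together with bounds like $km_k+pkm_{pk}\le n$. Since removing a part keeps a single-chain partition single-chain, that same peeling argument is essentially what a complete proof of your per-chain bound would have to be; to finish your route you would end up reproducing the paper's case analysis inside each chain, with your (correct, and mildly wasteful) multinomial assembly layered on top. So the approach is salvageable and not wrong, but as written the central inequality is asserted rather than established, and the proof is incomplete.
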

\begin{proof}
    We proceed by strong induction on the size $n$ of the partition $\nu$. The inequality is trivially satisfied for $n=1$. Hence, suppose that $n\geq 2$, let $\nu = (1^{m_1} 2^{m_2} \dots n^{m_n})$, and write $\nu = (k,\tilde{\nu})$ where $\tilde{\nu} \vdash (n-k)$ and $k$ is the largest part of $\nu$, i.e., the largest $m_k \geq 1$. If $k<p$, then the primality of $p$ implies that $\nu^p=\nu$%. Since $n! p^{\frac{1-n}{p}} \geq 1$ for all $n\geq 1$ and $p\in\{2,3\}$, 
    , and the conclusion follows trivially. Hence, we suppose without loss of generality that $k\geq p$.  
    Now, a direct computation reveal that
    \[
    z_\nu = \frac{z_{\tilde{\nu}}}{k^{m_k-1} (m_k-1)!} \cdot k^{m_k}m_k! = z_{\tilde{\nu}} k m_k.
    \]
    Moreover, if $k$ is coprime with $p$, then we have
    \[
    z_{\nu^p} = \frac{z_{\tilde{\nu}^p}}{k^{m_k+pm_{pk}-1} (m_k+pm_{pk}-1)!} \cdot k^{m_k+pm_{pk}}(m_k+pm_{pk})! = z_{\tilde{\nu}^p} k (m_k+pm_{pk}).
    \]
    Therefore, since $km_k+pkm_{pk} \leq \sum_{j=1}^n jm_j = n$, and since $m_k\geq 1$ by construction, we have
    \[
    \frac{z_{\nu^p}}{z_\nu} = \frac{z_{\tilde{\nu}^p} (km_k+pkm_{pk})}{z_{\tilde{\nu}} k m_k} \leq \frac{z_{\tilde{\nu}^p}}{z_{\tilde{\nu}}} \cdot \frac{n}{k} \leq  (n-k)! p^{\frac{1-n+k}{p}} \cdot \frac{n}{k} =  \frac{p^{\frac{k}{p}}}{k!}\binom{n-1}{k-1}^{\!-1} n!p^{\frac{1-n}{p}}.
    \]
    Since $k\mapsto p^{k/p}/k!$ is decreasing for all $k\geq 1$ and $p\geq 1$, we then have
    \[
    \frac{z_{\nu^p}}{z_\nu} \leq \frac{p^{\frac{p}{p}}}{p!}\binom{n-1}{k-1}^{\!-1} n!p^{\frac{1-n}{p}} = \frac{1}{p!} \binom{n-1}{k-1}^{\!-1} n!p^{\frac{1-n}{p}} \leq n!p^{\frac{1-n}{p}},
    \]
    as desired. 
    Now, suppose that $k$ is a multiple of $p$. If $k/p$ is coprime with $p$, then we have
    \begin{align*}
        z_{\nu^p} &= \frac{(k/p)^{m_{k/p} + pm_{k}} (m_{k/p} + pm_{k})!}{(k/p)^{m_{k/p} + pm_{k}-p} (m_{k/p} + pm_{k}-p)!} z_{\tilde{\nu}^p} =  \Big(\frac{k}{p}\Big)^{p} \frac{(m_{k/p} + pm_{k})!}{(m_{k/p} + pm_{k}-p)!} z_{\tilde{\nu}^p}.
    \end{align*}
    Otherwise, $k/p$ is a multiple of $p$ and we have
    \begin{align*}
        z_{\nu^p} &= \frac{z_{\tilde{\nu}^p}}{(k/p)^{ pm_{k}-p} (pm_{k}-p)!} \cdot  \Big(\frac{k}{p}\Big)^{pm_{k}} (pm_{k})!  = z_{\tilde{\nu}^p} \Big(\frac{k}{p}\Big)^{p} \frac{(pm_{k})!}{(pm_{k}-p)!},
    \end{align*}
    which is identical to the first case if $m_{k/p}=0$. Hence, we proceed with the first case without any loss of generality. Using once again the fact that $\frac{k}{p} (m_{k/p} + pm_{k}) \leq n$ and $m_k\geq 1$ yield
    \begin{align*}
        \frac{z_{\nu^p}}{z_\nu} &= \frac{z_{\tilde{\nu}^p} \big(\frac{k}{p}\big)^{p} (m_{k/p} + pm_{k})\cdots(m_{k/p} + pm_{k}-p+1)}{z_{\tilde{\nu}} k m_k} \leq \frac{z_{\tilde{\nu}^p} (n-\frac{0\cdot k}{p})\cdots(n-\frac{(p-1)k}{p})}{z_{\tilde{\nu}} k} \\
        &\leq \frac{z_{\tilde{\nu}^p} \, n(n-1)\cdots (n-p+1)}{z_{\tilde{\nu}} k} = \frac{z_{\tilde{\nu}^p} \, n!}{z_{\tilde{\nu}} k(n-p)!}  \leq \frac{p^{\frac{k}{p}}(n-k)!}{k(n-p)!} \cdot n! p^{\frac{1-n}{p}}.
    \end{align*}
    Lastly, use once more the fact that $k\mapsto p^{k/p}/k!$ is decreasing to obtain
    \begin{align*}
        \frac{z_{\nu^p}}{z_\nu} &\leq \frac{p^{\frac{p}{p}} (k-1)!(n-k)!}{p!(n-p)!} \cdot n! p^{\frac{1-n}{p}} = \frac{1}{k} \binom{n}{p}\binom{n}{k}^{\!-1} n! p^{\frac{1-n}{p}} \leq n! p^{\frac{1-n}{p}}.
    \end{align*}    
\end{proof}

Let us now bring back our attention on the estimation of $T_{\nu_1,\nu_2}(j)$. It is instructive to first consider the simple case when $j=1$. In that case, \Cref{cor - two-side} ensures that
\begin{align*}
    |T_{\nu_1,\nu_2}(1)| = \sum_{m|1} \mu(1/m) z_{\nu_1^m} \delta_{\nu_1^m,\nu_2^m} = z_{\nu_1} \delta_{\nu_1,\nu_2}.
\end{align*}
The following lemma expand on this by providing an upper bound on $|T_{\nu_1, \nu_2}(j)|$ in the general case, which shall prove to be crucial in the proof of \Cref{thm:Asymptotic}.

\begin{lemma}\label{lem - partition}
    Let $\nu_1$ and $\nu_2$ be two partitions of $n$, and let $T_{\nu_1,\nu_2}(j)$ be the set defined in \eqref{eq - T}. Then $|T_{\nu_1,\nu_2}(1)| = z_{\nu_1} \delta_{\nu_1,\nu_2}$. Moreover,
    \[
    |T_{\nu_1,\nu_2}(j)| \leq \begin{cases}
        j^{\frac{1-n}{j}}n! \sqrt{z_{\nu_1} z_{\nu_2}} \quad&\text{if } j \text{ is prime}, \\
        % 3^{\frac{1-n}{3}}n! \sqrt{z_{\nu_1} z_{\nu_2}} \quad&\text{if }~ j=3, \\
        \,\,n!  \quad&\text{otherwise}.
    \end{cases}
    \]
\end{lemma}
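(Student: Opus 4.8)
The plan is to bound $|T_{\nu_1,\nu_2}(j)|$ from above by the larger but much simpler set $\widetilde{T}_{\nu_1,\nu_2}(j)$, and then to control the latter through the centralizer identity that already appeared in the proof of \Cref{cor - two-side}, together with the ratio estimate of \Cref{lem - ratio}. This route sidesteps the Möbius formula entirely, since for an upper bound the containment $T\subseteq\widetilde{T}$ is all that is needed.

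First I would record two trivialities. The case $j=1$ is exactly the identity $|T_{\nu_1,\nu_2}(1)|=z_{\nu_1}\delta_{\nu_1,\nu_2}$ computed just before the statement. For the composite (``otherwise'') case, note simply that $T_{\nu_1,\nu_2}(j)\subseteq\Sym{n}$, so $|T_{\nu_1,\nu_2}(j)|\le n!$ with no further work; this also covers the residual bookkeeping for all non-prime $j$.

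The substance is the prime case $j=p$. Here I would use $T_{\nu_1,\nu_2}(p)\subseteq\widetilde{T}_{\nu_1,\nu_2}(p)=\{\sigma:g_1^p\sigma g_2^p=\sigma\}$. Writing $a=g_1^p$ and $b=g_2^p$, with respective cycle types $\nu_1^p$ and $\nu_2^p$, the fixed-point equation $a\sigma b=\sigma$ rearranges to $\sigma b^{-1}\sigma^{-1}=a$. Thus a solution exists if and only if $a$ and $b^{-1}$ are conjugate, i.e. if and only if $\nu_1^p=\nu_2^p$ (inverting does not change the cycle type), and when solutions exist their number equals the centralizer order $|C_{\Sym{n}}(a)|=z_{\nu_1^p}$; this is precisely the $\nu_1=\nu_2$ computation of \Cref{cor - two-side} applied to the $p$-th powers. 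Consequently $|T_{\nu_1,\nu_2}(p)|\le z_{\nu_1^p}\,\delta_{\nu_1^p,\nu_2^p}$. If $\nu_1^p\ne\nu_2^p$ the right-hand side is $0$ and nothing remains; otherwise $\nu_1^p=\nu_2^p$ forces $z_{\nu_1^p}=z_{\nu_2^p}$, so I can symmetrize and apply \Cref{lem - ratio} to each factor:
\[
|T_{\nu_1,\nu_2}(p)|\le z_{\nu_1^p}=\sqrt{z_{\nu_1^p}\,z_{\nu_2^p}}\le\sqrt{\bigl(n!\,p^{\frac{1-n}{p}}z_{\nu_1}\bigr)\bigl(n!\,p^{\frac{1-n}{p}}z_{\nu_2}\bigr)}=n!\,p^{\frac{1-n}{p}}\sqrt{z_{\nu_1}z_{\nu_2}},
\]
which is the claimed estimate.

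The only genuinely delicate point is this geometric-mean step. Applying \Cref{lem - ratio} to $\nu_1$ alone would give the asymmetric bound $z_{\nu_1^p}\le n!\,p^{(1-n)/p}z_{\nu_1}$, which does not produce the symmetric factor $\sqrt{z_{\nu_1}z_{\nu_2}}$ appearing in the statement. The observation that rescues this is that $T_{\nu_1,\nu_2}(p)$ is nonempty only when $\nu_1^p=\nu_2^p$, which makes $z_{\nu_1^p}=z_{\nu_2^p}$ and lets me rewrite $z_{\nu_1^p}$ as the geometric mean of two quantities, each bounded by \Cref{lem - ratio}. Everything else is routine.
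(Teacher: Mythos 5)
Your proposal is correct and follows essentially the same route as the paper's proof: bound $T_{\nu_1,\nu_2}(p)$ by $\widetilde{T}_{\nu_1,\nu_2}(p)$, use the identity $|\widetilde{T}_{\nu_1,\nu_2}(p)| = z_{\nu_1^p}\,\delta_{\nu_1^p,\nu_2^p}$ (which the paper quotes from the proof of \Cref{cor - two-side} and you re-derive via the conjugacy/centralizer argument), symmetrize to the geometric mean $\sqrt{z_{\nu_1^p} z_{\nu_2^p}}$, and apply \Cref{lem - ratio} to each factor. Your ``delicate point'' about the symmetrization is exactly what the paper encodes in the chain $|\widetilde{T}_{\nu_1,\nu_2}(p)| \leq \min\{z_{\nu_1^p}, z_{\nu_2^p}\} \leq \sqrt{z_{\nu_1^p} z_{\nu_2^p}}$.
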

\begin{proof}
The bound $|T_{\nu_1, \nu_2}(j)|\leq n!$ is trivial since $T_{\nu_1,\nu_2}(j) \subseteq \Sym{n}$. The case $|T_{\nu_1, \nu_2}(1)|$ is already discussed. Now, recall that $|\widetilde{T}_{\nu_1,\nu_2}(p)| = z_{[\nu_1]^p} \delta_{\nu_1^p,\nu_2^p}$ and that $T_{\nu_1,\nu_2}(p) \subseteq \widetilde{T}_{\nu_1,\nu_2}(p)$. Hence
    \begin{align*}
        |T_{\nu_1,\nu_2}(p)| \leq |\widetilde{T}_{\nu_1,\nu_2}(p)| \leq \min\{z_{\nu_1^p}, z_{\nu_2^p}\} \leq \sqrt{z_{\nu_1^p} z_{\nu_2^p}} \leq n!p^{\frac{1-n}{p}} \sqrt{z_{\nu_1} z_{\nu_2}},
    \end{align*}
    where the last inequality follows from \Cref{lem - ratio}. 
\end{proof}

Lastly, to conclude this section, we prove the following results which provides an asymptotic expression for the quantity $X_n = \sum_{\nu\vdash n}z_{\nu}-n!$. The short proof relies heavily on the work that was done in \cite{Geloun2022}.

\begin{lemma}
\label{lemma: asymptotic for sum of z}
Let $X_n = \sum_{\nu\vdash n}z_{\nu}-n!$. Then, 
\[ X_n = n! \!\left(\frac{2}{n^2}+ o(n^{-2})\right) = (2+o(1))\frac{n!}{n^2}\;.\]
\end{lemma}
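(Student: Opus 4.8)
The goal is to pin down the asymptotics of $X_n = \sum_{\nu\vdash n} z_\nu - n!$. The plan is to recognize $\sum_{\nu\vdash n} z_\nu$ as a quantity with a known generating function. Since $n!/z_\nu$ counts permutations of cycle type $\nu$, the sum $\sum_{\nu\vdash n} z_\nu$ is not itself a permutation count, but $z_\nu$ equals the size of the centralizer of any permutation of cycle type $\nu$. Hence $\sum_{\nu\vdash n} z_\nu = \sum_{\sigma\in\Sym{n}} 1 = $ wait — more carefully, grouping permutations by cycle type gives $\sum_\nu (n!/z_\nu)\cdot z_\nu = n!\cdot(\#\text{cycle types})$, which is the wrong weighting. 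Instead, $\sum_\nu z_\nu$ is exactly the number of pairs $(\sigma,\tau)$ of commuting permutations divided by... Let me reconsider: the number of commuting pairs is $\sum_\sigma |C(\sigma)| = \sum_\nu (n!/z_\nu)\cdot z_\nu = n!\cdot p(n)$, which again is not it. The correct identity is that $\sum_{\nu\vdash n} z_\nu$ counts something more subtle, and this is precisely where I would invoke the reference \cite{Geloun2022}, which I am told contains the relevant asymptotic machinery.

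Assuming \cite{Geloun2022} provides either an exact formula or a sharp asymptotic for $\sum_{\nu\vdash n} z_\nu$, the first concrete step is to extract from that work the leading-order behavior. I expect $\sum_{\nu\vdash n} z_\nu = n!\bigl(1 + 2/n^2 + o(n^{-2})\bigr)$, so that after subtracting the dominant $n!$ term (which comes from the single partition $\nu = (1^n)$, for which $z_{(1^n)} = n!$), the remaining sum over all partitions other than $(1^n)$ contributes the correction $2n!/n^2$. The natural approach is therefore to isolate $z_{(1^n)} = n!$ explicitly, then show that the next-largest contributions come from partitions that are ``close'' to $(1^n)$, namely those with a single $2$-cycle and the rest fixed points, i.e. $\nu = (2,1^{n-2})$, for which $z_\nu = 2\cdot(n-2)! $. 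This single partition contributes $2(n-2)! = 2n!/(n(n-1)) = (2/n^2)n!(1+o(1))$, matching the claimed leading term exactly.

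The heart of the argument is thus to confirm that $z_{(2,1^{n-2})} = 2(n-2)!$ is the dominant subleading term and that the sum of $z_\nu$ over all remaining partitions $\nu\neq (1^n), (2,1^{n-2})$ is $o(n!/n^2)$. For this I would bound $z_\nu$ for such partitions: any $\nu$ other than these two either has a part of size $\geq 3$, or has at least two parts of size $2$. In either case a short computation shows $z_\nu = O(n!/n^3)$ or smaller, and since the number of partitions $p(n)$ grows subexponentially (indeed $p(n) = e^{O(\sqrt n)}$), the total contribution of all such partitions is controlled by $p(n)\cdot \max_{\nu} z_\nu$ over the relevant range — though a crude $p(n)$ multiplier is too lossy, so I would instead organize the tail sum by the structure of $\nu$ and sum a convergent series. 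This is exactly the kind of estimate that \cite{Geloun2022} presumably carries out in full, and leaning on their result is what makes the proof short.

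The main obstacle I anticipate is making the tail estimate rigorous and sharp enough to obtain the $o(n^{-2})$ error rather than merely $O(n^{-2})$: one must verify that no collection of ``small'' partitions conspires to produce a second contribution of order $n!/n^2$. Concretely, the partition $(3,1^{n-3})$ gives $z_\nu = 3(n-3)! = O(n!/n^3)$ and $(2^2,1^{n-4})$ gives $z_\nu = 8(n-4)! = O(n!/n^4)$, both genuinely lower order, so the intuition is sound; the work lies in showing the full remaining sum is dominated by a geometric-type series whose total is $o(n!/n^2)$. Since the excerpt explicitly states the short proof ``relies heavily'' on \cite{Geloun2022}, I would cite the precise asymptotic expansion of $\sum_{\nu\vdash n} z_\nu$ established there and simply subtract $n!$, reducing the entire argument to a one-line consequence of the imported result together with the identification of the $(2,1^{n-2})$ term as the source of the $2/n^2$ coefficient.
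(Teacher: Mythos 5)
Your proposal is correct and takes essentially the same route as the paper: the paper's proof is exactly the citation-based argument you land on, invoking \cite[Theorem 3, Eq.~(2.59) with $K=2$]{Geloun2022} for the expansion and \cite[Eq.~(2.20)]{Geloun2022} for $S_{3;1;2}-n! = \frac{2n!}{n(n-1)} = 2(n-2)!$, which is precisely the contribution of $\nu=(2,1^{n-2})$ that you correctly identify as the source of the coefficient $2/n^2$. Your supplementary sketch of an elementary tail bound (isolating $(1^n)$ and $(2,1^{n-2})$ and checking that $(3,1^{n-3})$ and $(2^2,1^{n-4})$ are lower order) is sound intuition but is not needed once the cited expansion is imported, just as in the paper.
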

\begin{proof}
From~\cite[Theorem 3]{Geloun2022} (more precisely, see Equation $(2.59)$ with $K=2$), we have
\[ 
 X_n = n! \!\left(\frac{S_{3;1;2}-n!}{n!}+o(n^{-2})\right).
\]
However, by~\cite[Equation (2.20)]{Geloun2022} we have $S_{3;1;2}-n!= \frac{2 n!}{n(n-1)}$ which completes the proof.
\end{proof}

% \begin{remark}
%     Using the upper bounds in the proofs of \cite[Lemma 2 and Theorem 2]{Geloun2022}, and refining them when $K=2$, one can show that 
%     \[
%     X_n \leq \frac{3n!}{n^{2}}, \qquad n\geq 18.
%     \]
%     Combined with direct computation for the value of $X_n$ for $n\leq 17$, we find that
%     \[
%     X_n \leq \frac{18n!}{n^{2}}, \qquad n\geq 1.
%     \]
% \end{remark}

%From~\cite[Theorem 3]{Geloun2022} (more precisely, see Equation $(2.59)$ with $K=2$), we have that when $n$ is even,
% \[ 
% n^2 \frac{X_n+n! - S_{3;1;2}}{n!} \leq \mathcal{R}_{n,2} + \frac{n^2}{n!} \text{Sym}[2^{n/2}] P_1(n),
% \]
% where $\text{Sym}[2^{n/2}] = z_{(2^{n/2})} = 2^{n/2} (n/2)!$, $P_1(n) = p(n)-p(n-1)$, and
% \[
% \mathcal{R}_{n,2} = n^2 \sum_{k=3}^n \frac{1}{n(n-1)\cdots (n-k+1)} \sum_{\nu\vdash k, \nu_1=0} z_\nu.
% \]
% Using the upper bounds in the proofs of \cite{Geloun2022}, specifically of Lemma 2 and Theorem 2, and refining them when $K=2$, we may find that for all $n\geq 16$, we have
% % \[
% % \mathcal{R}_{n,2} < \frac{n}{(n-1)(n-2)}\left(9+\frac{3}{n-4}+\frac{32}{n-3}+\frac{264}{(n-3)(n-5)}\right).
% % \]
% However, by~\cite[Equation (2.20)]{Geloun2022} we have $S_{3;1;2}-n!= \frac{2\cdot n!}{n(n-1)}$.  Hence, we find altogether that
% \begin{align*}
%     &n^2 \frac{X_n+n! - S_{3;1;2}}{n!} \\
%     &< \frac{n}{(n-1)(n-2)}\left(9+\frac{3}{n-4}+\frac{32}{n-3}+\frac{264}{(n-3)(n-5)}\right) + \frac{2^{n/2} (n/2)!n^2}{n!}(p(n)-p(n-1)),
% \end{align*}

\section{A Sharp Upper Bound: Proof of Theorem~\ref{thm:Asymptotic}}
\label{sec - sharp_upperbound}

\subsection{General outline}

The goal of this final section is to prove our main result, namely \Cref{thm:Asymptotic}. Recall from \Cref{cor - two-side} that
\[   T(n, k) = \sum_{(\nu_1, \nu_2)\vdash n}\frac{1}{z_{\nu_1}z_{\nu_2}}\sum_{\lambda\vdash k} \prod_{j=1}^{k}\binom{|T_{\nu_1, \nu_2}(j)|/j}{m_j(\lambda)}\;.\]
Consider the case $\nu_1=\nu_2=(1^n)$. It is easily verified that $z_{\nu_1}=z_{\nu_2}=|T_{\nu_1,\nu_2}(1)|=n!$ and $|T_{\nu_1,\nu_2}(j)|=0$ if $j\neq 1$. In particular, the product term in the equation above is zero for any $\lambda\vdash k$ such that $m_\ell(\lambda) \neq 0$ for some $\ell>1$. Thus, the only partition $\lambda \vdash k$ yielding a nonzero product is $\lambda= (1^k)$, that is when $m_1(\lambda) = k$. 
In this case, we find
\[
\frac{1}{z_{(1^n)} z_{(1^n)}} \sum_{\lambda\vdash k}\prod_{j=1}^{k}\binom{|T_{(1^n),(1^n)}(j)|/j}{m_j(\lambda)} = \frac{1}{n!^2}\binom{n!}{k} =  \Lambda_n(k) \;.
\]
In particular, we get 
\[T(n, k) = \Lambda_n(k) \!\left(1+ \sum_{\lambda\vdash k }R(n, k, \lambda)\right),\]
where 
\begin{equation}
\label{eqn: Target}
  R(n, k, \lambda) := \frac{1}{\Lambda_n(k)} \!\sum_{\substack{\nu_1,\nu_2\vdash n \\ (\nu_1,\nu_2) \neq (1^n)^2}}\! \frac{1}{z_{\nu_1} z_{\nu_2}}\prod_{j=1}^{k}\binom{|T_{\nu_1,\nu_2}(j)|/j}{m_j(\lambda)}\;, 
\end{equation}
and we write $R(n, k)=\sum_{\lambda\vdash k}R(n, k, \lambda)$. 
\Cref{thm:Asymptotic} follows if we can show that $R(n, k)\to 0$, uniformly in $k\in [3, n!-3]$, as $n\to \infty$. This is precisely the content of our next proposition.
\begin{proposition}[Uniform decay of $R(n, k)$]
\label{prop: Uniform decay} 
Let $R(n, k)$ be as above. 
There is a universal constant $C>0$ (independent of both $n$ and $k$) such that
\begin{align*}
    R(n, k) &\leq \frac{C}{n^2},
\end{align*}
for all $n\geq 1$ and all $3\leq k\leq n!-3$. 
\end{proposition}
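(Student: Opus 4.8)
The plan is to isolate the single contribution that produces the claimed order $n^{-2}$ and to show that everything else is lower order. Write $N=n!$ and, for a pair of cycle types $(\nu_1,\nu_2)$, abbreviate $a_j:=|T_{\nu_1,\nu_2}(j)|/j$, the number of $j$-cycles of the induced permutation $\sigma\mapsto g_1\sigma g_2$ on $\Sym{n}$; thus $\sum_{j\ge1}j\,a_j=N$ and, by \Cref{lem - partition}, $a_1=z_{\nu_1}\delta_{\nu_1,\nu_2}$. Since $\sum_{\lambda\vdash k}\prod_j\binom{a_j}{m_j(\lambda)}=[x^k]\prod_{j\ge1}(1+x^j)^{a_j}$, the diagonal pair $\nu_1=\nu_2=\nu$ together with $\lambda=(1^k)$ contributes exactly $\binom{z_\nu}{k}$, the ``all fixed points'' term. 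I would split $R(n,k)=R_{\mathrm{main}}+R_{\mathrm{err}}$, where $R_{\mathrm{main}}=\frac{N^2}{\binom{N}{k}}\sum_{\nu\neq(1^n)}\binom{z_\nu}{k}/z_\nu^2$ gathers these terms and $R_{\mathrm{err}}$ collects all off-diagonal pairs and all diagonal contributions from partitions $\lambda\neq(1^k)$, i.e. those using at least one $\Phi$-cycle of length $\ge2$.

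For $R_{\mathrm{main}}$ the estimate is clean and essentially sharp. Since $z_\nu\le N$, one has $\binom{z_\nu}{k}\le\binom{N}{k}(z_\nu/N)^k$ for every $k$, so $R_{\mathrm{main}}\le\sum_{\nu\neq(1^n)}(z_\nu/N)^{k-2}$. For $k\ge3$ the exponent is at least $1$ and $z_\nu/N\le1$, whence $R_{\mathrm{main}}\le N^{-1}\sum_{\nu\neq(1^n)}z_\nu=X_n/N$, and \Cref{lemma: asymptotic for sum of z} gives $R_{\mathrm{main}}\le X_n/N=(2+o(1))\,n^{-2}$. This single term is what makes $R$ of size $\Theta(n^{-2})$ at the endpoint $k=3$ (and, by the complementation symmetry $k\leftrightarrow N-k$, at $k=N-3$), and it already explains why neither the rate nor the interval can be enlarged.

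The core work is to show $R_{\mathrm{err}}=o(n^{-2})$ uniformly, and I would do this term by term, drawing on three complementary mechanisms. A part of \emph{prime} size $j$ carries, via \Cref{lem - partition}, the factor $j^{(1-n)/j}$ — exponentially small in $n$ — together with a $\sqrt{z_{\nu_1}z_{\nu_2}}$ that renders the weighted sum $\sum 1/(z_{\nu_1}z_{\nu_2})\cdot\sqrt{z_{\nu_1}z_{\nu_2}}=\big(\sum 1/\sqrt{z_\nu}\big)^2$ convergent against the subexponential growth of $p(n)$. A part of \emph{composite} size $j$ enjoys only $a_j\le N/j$, but it consumes at least two units of the budget $k$, and against the prefactor $N^2/\binom{N}{k}$ combined with the bound $z_\nu\le 2(n-2)!$ this already forces smallness. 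Finally, for $k$ large (in particular in the bulk), I would discard the refined data and use the Chernoff estimate $\sum_{\lambda\vdash k}\prod_j\binom{a_j}{m_j(\lambda)}\le x^{-k}\prod_{j\ge1}(1+x^j)^{a_j}$, bounding $\prod_{j\ge2}(1+x^j)^{a_j}\le\exp\!\big(\tfrac{N}{2}x^2\big)$ (using $x^j/j\le x^2/2$ and $\sum_{j\ge2}j\,a_j\le N$ for $0<x\le1$) and optimizing at the saddle point of $\binom{N}{k}$; the factor $\exp(\tfrac N2 x^2)$ is then dwarfed by $\binom{N}{k}$, yielding decay that is super-exponential in $N$ and crushes the prefactor. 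Summing over the subexponentially many partitions and the at most $p(n)^2$ pairs preserves the bound.

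The step I expect to be the main obstacle is precisely the uniformity in $k$, i.e. the reconciliation of these regimes. The cheap bound $a_j\le N/j$ is hopeless for $k=3,4$ — there $\binom{N}{k}$ is only polynomially large in $N$ and the error would swamp the main term — so the prime refinement of \Cref{lem - partition} is indispensable; conversely the prime bounds are useless in the bulk. Pinning down a crossover so that at least one mechanism is decisive for \emph{every} $k\in[3,N-3]$, handling the composite cycle-lengths (which never benefit from the prime decay) purely through the binomial prefactor and the estimate $z_\nu\le2(n-2)!$, and — most delicately — controlling the diagonal error terms near $k\approx z_\nu$, where the fixed-point mass of $\Phi$ sits in the deep lower tail relative to its long-cycle part and the naive Chernoff bound is too lossy, is the technical heart of the argument.
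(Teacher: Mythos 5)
Your main-term analysis is correct and coincides with the paper's: isolating the diagonal $\lambda=(1^k)$ contribution, bounding $\binom{z_\nu}{k}\le\binom{n!}{k}(z_\nu/n!)^k$, and invoking \Cref{lemma: asymptotic for sum of z} to get $R_{\mathrm{main}}\le X_n/n!=(2+o(1))n^{-2}$ is exactly the paper's Subcase $\lambda=(1^3)$ computation, and you correctly read off from it why the rate and the domain are optimal. The genuine gap is everything else: the claim $R_{\mathrm{err}}=o(n^{-2})$ uniformly in $k$ is presented as a program with three mechanisms but no executed estimates, and you yourself flag the two places where it is unresolved — the crossover between regimes, and the diagonal terms with $k\approx z_\nu$. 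The second is not a mere inconvenience but a quantitative failure of your third mechanism: for $\nu\neq(1^n)$ one has $z_\nu\le 2(n-2)!$, and at the Chernoff saddle $x\asymp\sqrt{k/n!}$ the diagonal factor $(1+x)^{a_1}\le e^{z_\nu\sqrt{k/n!}}$ can be of size $e^{c\sqrt{k\,n!}/n^2}$, which for bounded $k$ overwhelms the only polynomial gain $\binom{n!}{k}^{-1}x^{-k}e^{n!x^2/2}=n!^{-\Theta(k)}\cdot e^{O(k)}$ available there; meanwhile your prime and composite mechanisms are sketched only for a bounded number of parts. Since the entire content of the Proposition is uniformity over $3\le k\le n!-3$, what you have is a correct main term plus an unproven error bound, i.e.\ a missing proof rather than missing details.

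For comparison, the paper closes precisely these holes with four concrete devices. First, for $k\in\{3,4,5\}$ it runs a finite case analysis over all $\lambda\vdash k$ (\Cref{prop - k=3to5}), using \Cref{lem - partition} very much along the lines of your prime/composite mechanisms. Second, for $k\ge6$ and the off-diagonal part (all $\lambda$ with $m_1(\lambda)=0$), it uses $\binom{n}{m}^{j}\le\binom{jn}{jm}$ together with the generalized Chu--Vandermonde identity and $\sum_j j\,|T_{\nu_1,\nu_2}(j)|/j\cdot j\le n!$ to compress the whole sum over $\lambda$ into a single factor $\binom{n!}{k}^{1/2}$, so that the normalization $\binom{n!}{k}^{-1}$ wins by a half power uniformly in $k$ — no crossover between regimes is ever needed. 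Third, on the diagonal it applies Cauchy--Schwarz to isolate $\sum_{\lambda}\binom{z_\nu}{m_1}$ and then splits according to $z_\nu\le k$ versus $z_\nu>k$ (bounding by $2^{z_\nu}$, respectively $e^k z_\nu^k/k^k$), which is exactly how the $k\approx z_\nu$ region you flagged gets handled. Fourth, it observes via \Cref{lemma: log-convex} that the resulting bound is log-convex in $k$, reducing uniformity on $[6,n!/2]$ to checking the two endpoints $k=6$ and $k=n!/2$ (the latter via \eqref{eq - binom_even} and a tuned splitting of $\sqrt{p(k)}$). If you want to complete your outline, these steps — especially the half-power compression and the $z_\nu\lessgtr k$ dichotomy — are the concrete substitute for your unspecified crossover.
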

This completes the proof of \Cref{thm:Asymptotic}. As the proof of \Cref{prop: Uniform decay} relies on several lemmas and several cases, we give the proof in the next section. 
 
\subsection{Proof of Proposition~\ref{prop: Uniform decay}} 
This section is devoted to the proof of this proposition. Since $R(n,k)=R(n,n!-k)$, we assume without loss of generality that $3\leq k \leq n!/2$. We split the proof of \Cref{prop: Uniform decay} into several propositions. \Cref{prop - k=3to5} deals with the case $k=3, 4$ and $5$, and \Cref{prop: k larger than 5} deals with the case $k\geq 6$. 
In the following, we repeatedly use the well-known estimates
\begin{gather}\label{eq - estimations}
    \frac{n^k}{k^k} \leq \binom{n}{k} \leq \frac{n^k}{k!},  \qquad 1\leq k\leq n.
\end{gather}

\subsubsection{The cases \texorpdfstring{$\bm{k=3,4}$ and 5}{k=3, 4, and 5}}
\begin{proposition}[The cases \texorpdfstring{$\bm{k=3,4}$ and 5}{k=3, 4, and 5}]
\label{prop - k=3to5}
Suppose that $k\in\{3,4,5\}$. There exists a constant $C>0$, independant of $k$, such that for all $n\geq 1$,
    \[ R(n, k)\leq \frac{C}{n^2}.\]
\end{proposition}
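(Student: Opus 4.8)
The plan is to estimate $R(n,k,\lambda)$ separately for each $\lambda\vdash k$ and then sum over the finitely many partitions of $k\in\{3,4,5\}$; it therefore suffices to bound each $R(n,k,\lambda)$ by $C_\lambda/n^2$ with $C_\lambda$ absolute. Three elementary estimates will be used repeatedly: $\sum_{\nu\vdash n}z_\nu^{-1}=1$ (from $\sum_\nu n!/z_\nu=|\Sym n|=n!$), its Cauchy--Schwarz consequence $\sum_{\nu}z_\nu^{-1/2}\le\sqrt{p(n)}$, and the subexponential growth $p(n)=e^{O(\sqrt n)}$ of the partition function. I also record $1/\Lambda_n(k)\le k^k/(n!)^{k-2}$, immediate from the left inequality in \eqref{eq - estimations} applied with $n\mapsto n!$, and the trivial bound $z_\nu\le X_n$ valid for every $\nu\ne(1^n)$, since $z_\nu$ is one of the positive summands of $X_n=\sum_{\mu\ne(1^n)}z_\mu$.

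The binding contribution is the one from $\lambda=(1^k)$. Here $m_1(\lambda)=k$, so the only surviving factor in the product is $\binom{|T_{\nu_1,\nu_2}(1)|}{k}=\binom{z_{\nu_1}\delta_{\nu_1,\nu_2}}{k}$, forcing $\nu_1=\nu_2=:\nu$. Bounding $\binom{z_\nu}{k}\le z_\nu^{k}/k!$ and using $z_\nu\le X_n$ gives
\[
R(n,k,(1^k))\;\le\;\frac{1}{\Lambda_n(k)}\sum_{\nu\ne(1^n)}\frac{1}{z_\nu^2}\binom{z_\nu}{k}\;\le\;\frac{k^k}{k!\,(n!)^{k-2}}\sum_{\nu\ne(1^n)}z_\nu^{k-2}\;\le\;\frac{k^k}{k!\,(n!)^{k-2}}\,X_n^{\,k-2}.
\]
Since \Cref{lemma: asymptotic for sum of z} gives $X_n=(2+o(1))\,n!/n^2$, the right-hand side equals $\frac{k^k\,2^{k-2}}{k!}(1+o(1))\,n^{-2(k-2)}$. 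This is $O(n^{-2})$, attained precisely at $k=3$ (where it is at most $(9+o(1))/n^2$); for $k=4,5$ it is $O(n^{-4})$ and $O(n^{-6})$. This is the only partition for which the target exponent $2$ is reached, and it is exactly where the second-order asymptotic of \Cref{lemma: asymptotic for sum of z} is essential.

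Every other $\lambda\vdash k$ (with $k\le5$) contains a part $\ge2$, and splits into two types. If $\lambda$ has a prime part $p\in\{2,3,5\}$ -- which accounts for all $\lambda\ne(1^k)$ except $(4)$ and $(4,1)$ -- then the associated factor carries a copy of $|T_{\nu_1,\nu_2}(p)|/p\le p^{(1-n)/p}\,n!\,\sqrt{z_{\nu_1}z_{\nu_2}}/p$ from \Cref{lem - partition}, which isolates the exponentially small gain $p^{-n/p}$. Bounding the remaining (boundedly many) factors of the product and carrying out the sum over cycle types -- which, depending on whether a $j=1$ part forces the diagonal $\nu_1=\nu_2$, reduces to one of $\sum_\nu z_\nu^{-1}$, $\sum_\nu z_\nu^{-1/2}$, $\sum_\nu 1=p(n)$, or a low moment $\sum_{\nu\ne(1^n)}z_\nu^{a}\le X_n^{a}$ -- one checks that the powers of $n!$ never accumulate to a positive exponent when set against $1/\Lambda_n(k)$, leaving a bound of the shape $n^{O(1)}e^{-cn}\,p(n)^{O(1)}=e^{-cn+O(\sqrt n)}$ for some $c>0$. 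This decays faster than any power of $n$, so such terms are $o(n^{-2})$. The two composite partitions $(4)$ and $(4,1)$ have no prime part; for them I use the crude bound $|T_{\nu_1,\nu_2}(4)|\le n!$, and combine it with $\sum_{\nu_1,\nu_2}(z_{\nu_1}z_{\nu_2})^{-1}=1$ (resp.\ the forced-diagonal sum $\sum_{\nu\ne(1^n)}z_\nu^{-1}\le1$) and $1/\Lambda_n(k)\le k^k/(n!)^{k-2}$; the surviving net power of $n!$ is negative, yielding $O(1/n!)$ and $O(1/(n!)^2)$ respectively, again $o(n^{-2})$.

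Summing the finitely many partitions then gives $R(n,k)\le C/n^2$ for all sufficiently large $n$; the finitely many remaining small values of $n$ (for which $R(n,k)$ is a finite number) are absorbed by enlarging $C$. The main obstacle here is conceptual rather than computational: one must recognize that $\lambda=(1^k)$ at $k=3$ is the unique place where the exponent $2$ is genuinely saturated, so the whole sharp-rate claim rests on the precise estimate $X_n\sim2\,n!/n^2$ of \Cref{lemma: asymptotic for sum of z}; every other partition is lower order, tamed either by the exponential gain $p^{-n/p}$ of \Cref{lem - partition} against the subexponential $p(n)$, or by the smallness of $1/\Lambda_n(k)$.
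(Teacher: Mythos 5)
Your proposal is correct and takes essentially the same route as the paper's proof: the same decomposition into the finitely many $R(n,k,\lambda)$, the same use of \Cref{lem - partition} to gain the exponentially small factor $p^{(1-n)/p}$ (or the trivial bound $|T_{\nu_1,\nu_2}(j)|\le n!$) on partitions with a part $\ge 2$, and the same use of \Cref{lemma: asymptotic for sum of z} to show that $\lambda=(1^k)$ --- dominant at $k=3$ --- contributes $O(n^{-2(k-2)})$. Your ``one checks'' step for the prime-part partitions does verify case by case (the net power of $n!$ is never positive once weighed against $1/\Lambda_n(k)\le k^k/n!^{\,k-2}$), so the only differences from the paper are cosmetic: you apply the prime bound uniformly where the paper sometimes substitutes the cruder $n!$ bound, and you handle the three $(1^k)$ subcases in one stroke via $\sum_{\nu\neq(1^n)} z_\nu^{k-2}\le X_n^{k-2}$.
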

\begin{proof}
    Since there are only a finite number of partitions of 3, 4 and 5, it suffices to show that $R(n, k, \lambda)\leq C/n^2$ for some $C$ for each such partitions $\lambda$. %We treat each $\lambda$ separately. 
    In the following, we use the fact that $n!^{k-2}/\Lambda_n(k) = \frac{k!n!^{k-1}}{(n!-1)\cdots (n!-k+1)}\leq (2k)^k\binom{2k}{k}^{\!-1} =:C_k$ for all $n!\geq 2k$.
    
    \smallskip
    \noindent\textbf{Case 1: $\bm{k=3}$.}
    \vspace{-6pt}
    \begin{itemize}[leftmargin=22pt]
        \item \textbf{Subcase 1: $\bm{\lambda = (1^3)}$.}
        Using \Cref{lem - partition} and equation \cref{eq - estimations}, we obtain
        \begin{align*}
            R(n, 3, \lambda) \leq \frac{1}{6\Lambda_n(3)} \!\!\!\sum_{\substack{\nu_1,\nu_2\vdash n \\ (\nu_1,\nu_2) \neq (1^n)^2}}\!\!\! \frac{|T_{\nu_1,\nu_2}(1)|^3}{z_{\nu_1} z_{\nu_2}} \leq \frac{2}{n!}\sum_{(1^n)\neq \nu \vdash n}\! z_{\nu}.% \leq \frac{36}{n^2}
        \end{align*}
        The conclusion now follows from \Cref{lemma: asymptotic for sum of z}.

        % \begin{align*}
        %     R(n, 3, \lambda) &= \frac{1}{6\Lambda_n(3)} \sum_{(1^n)\neq \nu\vdash n} \frac{(z_{\nu}-1)(z_{\nu}-2)}{z_{\nu}} = \frac{1}{6\Lambda_n(3)} \sum_{(1^n)\neq \nu\vdash n} (z_\nu-3+\frac{2}{z_\nu}) \\
        %     %
        %     &\leq \frac{(2+\varepsilon)n!}{6n^2\Lambda_n(3)}+\frac{1}{6\Lambda_n(3)}\left( 2-\frac{2}{n!}-3p(n)+3 \right)
        % \end{align*}

        \item \textbf{Subcase 2: $\bm{\lambda = (1^1,2^1)}$.} 
        Once again we use \Cref{lem - partition} to obtain
        \begin{align*}
             R(n, 3, \lambda) = \frac{1}{2\Lambda_n(3)} \!\!\!\sum_{\substack{\nu_1,\nu_2\vdash n \\ (\nu_1,\nu_2) \neq (1^n)^2}}\!\!\! \frac{|T_{\nu_1,\nu_2}(1) | |T_{\nu_1,\nu_2}(2)|}{z_{\nu_1} z_{\nu_2}}
             \leq \frac{6}{2^{\frac{n-1}{2}}} \sum_{\nu \vdash n } 1 = \frac{6p(n)}{2^{\frac{n-1}{2}}}.
        \end{align*}

        \item \textbf{Subcase 3: $\bm{\lambda = (3^1)}$.} 
        We use one last time \Cref{lem - partition} to obtain
        \begin{align*}
           R(n, 3, \lambda) = \frac{1}{3\Lambda_n(3)} \!\!\!\sum_{\substack{\nu_1,\nu_2\vdash n \\ (\nu_1,\nu_2) \neq (1^n)^2}}\!\!\! \frac{|T_{\nu_1,\nu_2}(3)|}{z_{\nu_1} z_{\nu_2}}
            &\leq  \frac{4}{3^{\frac{n-1}{3}}} \sum_{\nu_1,\nu_2\vdash n} \!1  =  \frac{4p^2(n)}{3^{\frac{n-1}{3}}}\;.
        \end{align*}  
    \end{itemize}
    % The conclusion follows by noting that $\smash{\frac{p(n)}{\Gamma(n/2+1)} \leq \frac{p(n)^2}{\Gamma(n/3+1)}\leq \frac{C}{n^2}}$. 
    \vspace{-6pt}
    \noindent\textbf{Case 2: $\bm{k=4}$.}
    \vspace{-6pt}
    \begin{itemize}[leftmargin=22pt]
        \item \textbf{Subcase 1: $\bm{\lambda=(2^2)}$.}  
        We find by \Cref{lem - partition} and the upper bound of \eqref{eq - estimations} that
        \begin{align*}
           R(n, 4, \lambda) &\leq  
            \frac{C_4}{n!^2} \!\sum_{\nu_1,\nu_2\vdash n}\! \frac{|T_{\nu_1,\nu_2}(2)|^2}{z_{\nu_1} z_{\nu_2}} \leq
            \frac{C_4}{2^{n-1}} \!\sum_{\nu_1,\nu_2\vdash n}\! \frac{z_{\nu_1} z_{\nu_2}}{z_{\nu_1} z_{\nu_2}}= \frac{C_4 p^2(n)}{2^{n-1}}.
        \end{align*}

        \item \textbf{Subcase 2: $\bm{\lambda = (4^1)}$.} Since $\sum_{\nu\vdash n}\frac{1}{z_{\nu}}=1$, we have
        \begin{align*}
           R(n, 4, \lambda)
            &\leq \frac{C_4}{n!^2} \!\!\sum_{\substack{\nu_1,\nu_2\vdash n \\ (\nu_1,\nu_2) \neq (1^n)^2}}\!\!\! \frac{|T_{\nu_1,\nu_2}(4)|}{z_{\nu_1} z_{\nu_2}} \leq \frac{C_4}{n!^2}  \sum_{\nu_1,\nu_2\vdash n } \frac{n!}{z_{\nu_1} z_{\nu_2}} =\frac{C_4}{n!}.
        \end{align*}

        \item \textbf{Subcase 3: $\bm{\lambda = (1^1, 3^1)}$}
        Using \Cref{lem - partition}, we obtain 
        \[R(n, 4, \lambda) = \frac{1}{3\Lambda_n(4)} \!\!\!\sum_{\substack{\nu_1,\nu_2\vdash n \\ (\nu_1,\nu_2) \neq (1^n)^2}}\!\!\! \frac{|T_{\nu_1,\nu_2}(1) | |T_{\nu_1,\nu_2}(3)|}{z_{\nu_1} z_{\nu_2}} \leq \frac{C_4}{n!^2}\sum_{\nu\neq (1^n)}\frac{z_{\nu}\cdot n!}{z_{\nu}^2}\leq \frac{C_4}{n!}\;.\]
 
        \item \textbf{Subcase 4: $\bm{\lambda = (1^2, 2^1)}$}
        Using \Cref{lem - partition}, we obtain 
        \[ R(n, 4, \lambda) \leq \frac{1}{4\Lambda_n(4)} \!\!\!\sum_{\substack{\nu_1,\nu_2\vdash n \\ (\nu_1,\nu_2) \neq (1^n)^2}}\!\!\! \frac{|T_{\nu_1,\nu_2}(1) |^2 |T_{\nu_1,\nu_2}(2)|}{z_{\nu_1} z_{\nu_2}} \leq  \frac{C_4}{n!^2} \sum_{\nu\vdash n} n! \leq \frac{C_4 p(n)}{n!}\;.\]
 
        \item \textbf{Subcase 5: $\bm{\lambda = (1^4)}$} 
        \Cref{lem - partition} and \Cref{lemma: asymptotic for sum of z} yield together
        \[
        R(n, 4, \lambda) \leq  \frac{1}{24\Lambda_n(4)} \!\!\!\sum_{\substack{\nu_1,\nu_2\vdash n \\ (\nu_1,\nu_2) \neq (1^n)^2}}\!\!\!\! \frac{|T_{\nu_1,\nu_2}(1) |^4}{z_{\nu_1} z_{\nu_2}}  \leq \frac{C_4}{n!^2} \!\!\sum_{(1^n)\neq\nu \vdash n} \!\!z_{\nu}^2 \leq    \frac{C_4}{n!^2} \left(  \sum_{(1^n)\neq\nu \vdash n } \!\!z_{\nu}\right)^{\!\smash{2}} \!\leq \frac{C_4}{n^4} .
        \]
    \end{itemize}

    \vspace{-6pt}
    \noindent\textbf{Case 3: $\bm{k=5}$.}
    \vspace{-6pt}
    \begin{itemize}[leftmargin=22pt]
        \item \textbf{Subcase 1: $\bm{\lambda=(1^5)}$ or $\bm{(1^3,2^1)}$.}  
        \Cref{lem - partition} and the upper bound of \eqref{eq - estimations} yield
        \begin{align*}
           R(n, 5, \lambda) &\leq  
            \frac{C_5}{n!^3} \!\sum_{\substack{\nu_1,\nu_2\vdash n \\ (\nu_1,\nu_2) \neq (1^n)^2}}\! \frac{|T_{\nu_1,\nu_2}(1)|^{5-2m} |T_{\nu_1,\nu_2}(2)|^{m}}{z_{\nu_1} z_{\nu_2}} \leq \frac{C_5}{n!^3} \sum_{(1^n)\neq\nu\vdash n} z_\nu^{3-2m} n!^m \\
            &\leq  \frac{C_5}{n!^{3-m}} \left(\sum_{(1^n)\neq\nu\vdash n} z_\nu \right)^{\!3-2m} \leq \frac{C_5'}{n!^{m} n^{6-4m}},
        \end{align*}
        where the last inequality follows from \Cref{lemma: asymptotic for sum of z} and where $m=0$ or $1$.

        \item \textbf{Subcase 2: $\bm{\lambda = (1^2, 3^1)}$ or $\bm{(1^1, 2^2)}$.}
        Using \Cref{lem - partition} and the upper bound of \eqref{eq - estimations}, we obtain for $m=1$ or $m=2$
        \[R(n, 5, \lambda) \leq \frac{C_5}{n!^3} \!\!\!\sum_{\substack{\nu_1,\nu_2\vdash n \\ (\nu_1,\nu_2) \neq (1^n)^2}}\!\!\! \frac{|T_{\nu_1,\nu_2}(1) |^m |T_{\nu_1,\nu_2}(\frac{5-m}{3-m})|^{3-m}}{z_{\nu_1} z_{\nu_2}} \leq \frac{C_5}{n!^3}\sum_{ \nu \vdash n} \frac{n!^{3-m}}{z_\nu^{2-m}}  \leq \frac{C_5 p(n)}{n!^m}\;.\]
 
        \item \textbf{Subcase 3: $\bm{\lambda = (1^1,4^1),  (2^1,3^1)}$ or $\bm{(5^1)}$.} 
        Using \Cref{lem - partition}, we may bound each $|T_{\nu_1,\nu_2}(j)|$ by $n!$ to obtain
        \vspace{-4pt}
        \[
        R(n, 5, \lambda) \leq  \frac{C_5}{n!^3} \!\!\!\smash{\sum_{\substack{\nu_1,\nu_2\vdash n \\ (\nu_1,\nu_2) \neq (1^n)^2}}}\!\!\! \frac{n!^2 }{z_{\nu_1} z_{\nu_2}}  \leq  \frac{C_5}{n!} \;.\]
    \end{itemize}
\end{proof}

\subsubsection{The case \texorpdfstring{$\bm{k\geq 6}$}{k≥6}}
We now deal with \Cref{prop: Uniform decay} when $k\geq 6$. We begin with some preliminary facts that will be repeatedly used in the proof. We note that a simple counting argument shows that
\[
\binom{n}{m}^{\!k} \leq \binom{kn}{km}, \qquad k\geq 1.
\]
%Indeed, the left-hand side counts the number of ways to select $m$ objects among $k$ sets of $n$ objects, while the right-hand side counts the number of ways to select $km$ objects among $kn$ objects. Since the $kn$ objects can be interpreted as the union of the $k$ sets of $n$ objects from the left-hand side, it is clear that each combination on the left-hand side is also counted on the right-hand side, and thus that the inequality holds. 
We also recall the following elementary identity, often known as the generalized Chu--Vandermonde identity~\cite[Theorem 5.2]{Vignat15},
%\textcolor{red}{Maybe add a citation and drop the explanation}\textcolor{blue}{I tried to find one for around 3 hours and couldn't find exactly this identity, except on Wikipedia and on an old unpublished paper on arXiv. If you find one I'll be glad to remove the explanation.}\textcolor{red}{I have a added a reference for the time being. I am not very happy with it because it doesn't state the exact identity explicitly. If you can find    Combinatorial Idenitities' by John Riordan, it maybe worth checking for the exact statement. At any rate I have also added a one line explanation.}, which states that
\begin{equation*}\label{eq - Chu-Vandermonde}
    \sum _{k_{1}+\cdots +k_{p}=m}\binom{n_{1}}{k_{1}}\binom{n_{2}}{k_{2}}\cdots \binom{n_{p}}{k_{p}}=\binom{n_{1}+\dots + n_{p}}{m}.
\end{equation*}
The above identity is immediate by noticing that both sides count the coefficient of $x^{m}$ in $\prod_{j=1}^{p}(1+x)^{n_i}=(1+x)^{\sum_{i=1}^{p}n_i}$. 
%The identity holds since on the left-hand side, we choose $k_1$ elements from a set of $n_1$ items, then $k_2$ from another set of $n_2$ items, and so on, until we've chosen a total of $m = k_1 + k_2 + \cdots + k_p$ elements from $p$ distinct sets, which is the same process as selecting $m$  elements from the combined pool of $n_1 + n_2 + \cdots + n_p$ items, i.e., exactly what is being done on the right-hand side. 
We also record the pair of inequalities 
\begin{equation}\label{eq - binom_even}
    \frac{2^{2n}}{2\sqrt{n}} \leq \binom{2n}{n} \leq \frac{2^{2n}}{\sqrt{\pi n}},\qquad n\geq 1,
\end{equation}
which appears in \cite{Watson1958}. See \cite{Qi2010} for a thorough survey of inequalities of this kind. Finally, we need the following lemma.
\begin{lemma}
\label{lemma: log-convex}
    Let $n\in\mathbb{N}$. The function $k\mapsto \binom{n}{k}$ is log-concave in the interval $1\leq k \leq n-1$.
\end{lemma}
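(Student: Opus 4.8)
The plan is to prove log-concavity in its equivalent three-term form: for every interior index $k$ with $1 \le k \le n-1$ (so that $k-1$, $k$, and $k+1$ are all valid indices and every coefficient involved is strictly positive), I must establish
\[
\binom{n}{k}^2 \ge \binom{n}{k-1}\binom{n}{k+1}.
\]
Since all binomial coefficients in this range are positive, this inequality is precisely the assertion that the sequence $k \mapsto \binom{n}{k}$ is log-concave on the stated interval.

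First I would pass to the ratio of the two sides, which is the cleanest route because the factorials telescope. Using the elementary identities $\binom{n}{k-1}\!/\!\binom{n}{k} = k/(n-k+1)$ and $\binom{n}{k+1}\!/\!\binom{n}{k} = (n-k)/(k+1)$, I obtain
\[
\frac{\binom{n}{k-1}\binom{n}{k+1}}{\binom{n}{k}^2} = \frac{k}{n-k+1}\cdot\frac{n-k}{k+1} = \frac{k(n-k)}{(k+1)(n-k+1)}.
\]
It then suffices to show that this quantity is at most $1$.

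The final step is a direct comparison of numerator and denominator. Expanding the denominator gives $(k+1)(n-k+1) = k(n-k) + (n+1)$, so the denominator exceeds the numerator by exactly $n+1 > 0$. Hence the ratio is strictly less than $1$, which yields $\binom{n}{k}^2 > \binom{n}{k-1}\binom{n}{k+1}$ and completes the proof.

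I expect no serious obstacle here, as the argument is elementary; the only points requiring care are (i) restricting to $1 \le k \le n-1$ so that all three coefficients are nonzero and the displayed ratios are well defined, and (ii) keeping track of signs when clearing denominators, which is harmless since each of $k$, $k+1$, $n-k$, and $n-k+1$ is positive on this range. An equivalent formulation, should one prefer to avoid division, is to show directly that the consecutive ratio $\binom{n}{k+1}\!/\!\binom{n}{k} = (n-k)/(k+1)$ is strictly decreasing in $k$; this is immediate because its numerator decreases while its denominator increases with $k$, and a strictly decreasing consecutive-ratio is exactly log-concavity of the sequence.
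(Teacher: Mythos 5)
Your proof is correct and follows essentially the same route as the paper: both reduce log-concavity to the three-term inequality $\binom{n}{k}^{2}\geq\binom{n}{k-1}\binom{n}{k+1}$ and verify it by computing the ratio of the two sides via the standard consecutive-ratio identities. The only cosmetic difference is that you work with the reciprocal ratio $\frac{k(n-k)}{(k+1)(n-k+1)}\leq 1$ (noting the denominator exceeds the numerator by exactly $n+1$) while the paper writes the inverse product $\frac{n-k+1}{n-k}\cdot\frac{k+1}{k}\geq 1$; these are the same computation.
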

\begin{proof}
    This is equivalent to proving that $\smash{\binom{n}{k}^{\!2} \geq \binom{n}{k-1}\binom{n}{k+1}}$, which holds since
    \[
    \frac{\binom{n}{k}^{\smash{\!2}}}{\binom{n}{k-1}\binom{n}{k+1}} = \frac{n-k+1}{n-k} \cdot \frac{k+1}{k} \geq 1.
    \]
\end{proof}

We can now state and prove the main result of this section.
\begin{proposition}\label{prop: k larger than 5}
    Let $k\geq 6$. There exists a constant $C>0$ and a positive integer $N$, both independant of $k$, such that for all $n\geq N$,
    \[ R(n, k)\leq \frac{C}{n^2}\;.\]
\end{proposition}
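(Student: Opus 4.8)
The plan is to bound each summand of $R(n,k)=\sum_{\lambda\vdash k}R(n,k,\lambda)$ from \eqref{eqn: Target} and to organize the partitions $\lambda$ by the multiplicity $m_1(\lambda)$ of their parts equal to $1$. By \Cref{lem - partition} we have $|T_{\nu_1,\nu_2}(1)|=z_{\nu_1}\delta_{\nu_1,\nu_2}$, so the factor $\binom{|T_{\nu_1,\nu_2}(1)|}{m_1(\lambda)}$ vanishes off the diagonal as soon as $m_1(\lambda)\ge 1$. This splits the proof into a \emph{diagonal family} ($m_1\ge 1$), supported on pairs $(\nu,\nu)$, and an \emph{off-diagonal family} ($m_1=0$), whose parts are all $\ge 2$ and which receives contributions from all pairs $(\nu_1,\nu_2)$. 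Having reduced to $6\le k\le n!/2$ by symmetry, I would throughout use $\binom{a}{m}\le a^m/m!$, the estimates \eqref{eq - estimations} and \eqref{eq - binom_even}, the super-additivity $\binom{a}{b}^{c}\le\binom{ca}{cb}$, and the two facts $\sum_{\nu\ne(1^n)}z_\nu=(2+o(1))n!/n^2$ (\Cref{lemma: asymptotic for sum of z}) and $\max_{\nu\ne(1^n)}z_\nu=z_{(2,1^{n-2})}=2(n-2)!$.

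For the off-diagonal family I would sum over $\lambda$ with $m_1=0$ first, for each fixed $(\nu_1,\nu_2)$: by \Cref{thm:GeneralAction} this produces the coefficient $[x^k]\prod_{j\ge2}(1+x^j)^{c_j}$ with $c_j=|T_{\nu_1,\nu_2}(j)|/j$. Using the trivial bound $|T_{\nu_1,\nu_2}(j)|\le n!$ one has $c_j\le n!/j$ and $\sum_{j\ge2}jc_j\le n!$, hence $\sum_{j\ge2}c_j\le n!/2$; the elementary bound $[x^k]Q\le Q(r)/r^k$ (valid for nonnegative coefficients) applied at the minimizing $r\le 1$, together with $\prod_{j\ge2}(1+r^j)^{c_j}\le(1+r^2)^{n!/2}$, bounds this coefficient by $(n!+1)\binom{n!/2}{\lfloor k/2\rfloor}$. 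Super-additivity then gives $\binom{n!/2}{\lfloor k/2\rfloor}\le\sqrt{\binom{n!}{k}}$, and since $\sum_{(\nu_1,\nu_2)}1/(z_{\nu_1}z_{\nu_2})\le 1$ the entire off-diagonal family is at most $n!^{2}(n!+1)/\sqrt{\binom{n!}{k}}$. Invoking $\binom{n!}{k}\ge(n!/k)^k$ from \eqref{eq - estimations}, this is a fixed power of $n!$ times $(k/n!)^{k/2}$, which I would check is maximized at $k=n!/e$ and is super-polynomially small; in particular it is $\le C/n^2$ for every $k\ge 7$. The residual case $k=6$ (with $k\le 5$ already treated in \Cref{prop - k=3to5}) involves only finitely many partitions and is handled by the direct estimates there.

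For the diagonal family the dominant term is the all-fixed-points partition $\lambda=(1^k)$ and its neighbours. Here I would keep the exact value $|T_{\nu,\nu}(1)|=z_\nu$, bound $\binom{z_\nu}{m_1}\le z_\nu^{m_1}/m_1!$, and sum over $\nu\ne(1^n)$ by means of $\sum_{\nu\ne(1^n)}z_\nu^{\,a}\le(2(n-2)!)^{a-1}\sum_{\nu\ne(1^n)}z_\nu=O\!\big((n!)^{a}/n^{2a}\big)$ for $a\ge 1$. Together with $\binom{n!}{k}\ge(n!/k)^k$ this yields $R(n,k,(1^k))=O\!\big(e^{k}\,n^{-2(k-2)}\big)$, which is $\le C/n^2$ for $k\ge 6$ and $n$ large, with strong decay in $k$; the partitions with $m_1$ close to $k$ are controlled the same way. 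This argument is valid while the all-fixed-points term is nonzero, i.e. for $k\le 2(n-2)!$. For larger $k$ I would instead sum \Cref{thm:GeneralAction} over all $\lambda$ with $m_1\ge 1$, obtaining a coefficient of $(1+x)^{z_\nu}\prod_{j\ge2}(1+x^j)^{c_j}$, and estimate it by the coefficient bound at $r=k/(n!-k)$ against the matching lower bound $\binom{n!}{k}\ge(1+r)^{n!}/((n!+1)r^k)$; because $z_\nu\le 2(n-2)!=o(n!/n)$, the factor $(1+r)^{z_\nu}$ is negligible compared with the gain $[(1+r^2)/(1+r)^2]^{n!/2}\le e^{-c\,k(n!-k)/n!}$, so this contribution is $\le C/n^2$ once $k\gtrsim n\log n$. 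Since $n\log n\ll 2(n-2)!$, the two sub-ranges overlap and cover all $k\ge 6$.

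Finally I would sum the per-partition bounds over all $\lambda\vdash k$: as $p(k)=e^{O(\sqrt k)}$ is subexponential while each estimate decays geometrically in the complexity of $\lambda$ (a factor $O(n^{-2})$ per fixed point beyond the second and a factor $O(1/n!)$ per part of size $\ge 2$), and as $\binom{n!}{k}$ supplies overwhelming decay for large $k$, the total remains $\le C/n^2$ uniformly for $n\ge N$. The hard part will be exactly this uniformity in $k$: reconciling the sharp, slack-free binomial estimates needed for small $k$ with the coefficient estimates that unavoidably carry a polynomial factor $(n!+1)$ and are only efficient for large $k$, and checking that their ranges of validity overlap so that $C$ and $N$ may be chosen independently of $k$. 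The decisive stress test is the off-diagonal partition $(2^{k/2})$ near $k=n!/2$, where $\binom{n!}{k}\ge(n!/k)^k$ is far too weak and one must use \eqref{eq - binom_even}, \Cref{lemma: log-convex}, and super-additivity to see that $\binom{n!}{k}$ dwarfs the numerator.
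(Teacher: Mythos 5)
Your proposal is correct in architecture and shares the paper's decisive first move: split $R(n,k)$ according to whether $m_1(\lambda)=0$ or $m_1(\lambda)\geq 1$, use $|T_{\nu_1,\nu_2}(1)|=z_{\nu_1}\delta_{\nu_1,\nu_2}$ from \Cref{lem - partition} to force the diagonal in the second case, compare everything against $\binom{n!}{k}$ via the mass identity $\sum_j |T_{\nu_1,\nu_2}(j)|=n!$, and extract the $n^{-2}$ rate from \Cref{lemma: asymptotic for sum of z}. For the $m_1=0$ family your generating-function bound $[x^k]\prod_{j\geq2}(1+x^j)^{c_j}\leq(1+r^2)^{n!/2}/r^k$ is a Chernoff-flavored variant of the paper's route (superadditivity plus Chu--Vandermonde straight to $\binom{n!}{k}^{1/2}$); the real divergence is in the diagonal family, where the paper peels off $\binom{z_\nu}{m_1}$ by Cauchy--Schwarz, splits $\sum_{m\leq k}\binom{z_\nu}{m}$ into the regimes $z_\nu\leq k$ and $z_\nu>k$, and finishes with a log-convexity endpoint evaluation (\Cref{lemma: log-convex}, \eqref{eq - binom_even}, Hardy--Ramanujan with $\sqrt{x}\leq\alpha x+\tfrac{1}{4\alpha}$) plus monotonicity of $p(k)/\beta^k$, whereas you use per-partition moment bounds $\sum_{\nu\neq(1^n)}z_\nu^{a}\leq(2(n-2)!)^{a-1}\sum_{\nu\neq(1^n)}z_\nu$ for moderate $k$ and a saddle/coefficient comparison with $\binom{n!}{k}\geq(1+r)^{n!}/((n!+1)r^k)$ at $r=k/(n!-k)$ for $k\gtrsim n\log n$. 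I checked the key instances: $\lambda=(1^k)$ gives $O\bigl((2e)^k n^{-2(k-2)}\bigr)$, which is $\leq C/n^2$ uniformly for $k\geq6$ precisely because $2k-6\geq k$, and the large-$k$ ratio $(n!+1)\bigl[(1+r^2)/(1+r)^2\bigr]^{(n!-z_\nu)/2}\leq(n!+1)e^{-k(n!-k)(1-o(1))/n!}$ is $o(n^{-2})$ once $k\geq Cn\log n$; the two ranges overlap as you claim. Your scheme trades the paper's slack-free square-root bookkeeping for polynomial $(n!+1)$-type losses, buying a more transparent two-regime picture.

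Three concrete soft spots need repair. First, your aggregated off-diagonal bound $n!^2(n!+1)\binom{n!}{k}^{-1/2}$ is only a constant (near $\sqrt{720}$) at $k=6$, and your patch is misattributed: \Cref{prop - k=3to5} treats only $k\in\{3,4,5\}$, so the eleven partitions of $6$ must actually be worked through in the style of its subcases --- routine and successful (the worst diagonal term $(1^6)$ gives $O(n^{-8})$, the prime-indexed terms decay exponentially via \Cref{lem - partition}), but it is a new finite case analysis, not a citation. Second, for odd $k$ your coefficient bound silently loses a factor $s^{-1/2}\approx\sqrt{n!/k}$ from the mismatch between $k/2$ and $\lfloor k/2\rfloor$; it happens to be rescued because $\binom{n!/2}{\lfloor k/2\rfloor}^{2}\leq\binom{n!}{k-1}=\binom{n!}{k}\cdot k/(n!-k+1)$ returns the compensating $\sqrt{k/n!}$, but as written $k=7$ would also degenerate to a constant, so this must be said. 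Third, $n!^{O(1)}(k/n!)^{k/2}$ is \emph{minimized}, not maximized, at $k=n!/e$; the maximum over $[7,n!/2]$ sits at the endpoints --- which is exactly why the paper reaches for log-convexity and endpoint evaluation --- and relatedly your final summation over all $p(k)$ partitions in the diagonal mid-range is left at the level of a per-part heuristic: the multinomial factors $k^k/\prod_j m_j(\lambda)!$ are superexponential in $k$ for balanced $\lambda$ and are only absorbed because $k\leq n!/2$ makes $n!^{(m_1-k)/2}$ dominate, a verification you gesture at but do not perform. None of these is fatal; with the $k=6$ case done explicitly and the odd-$k$ and summation bookkeeping written out, your argument goes through with $C$ and $N$ independent of $k$.
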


\begin{proof}
Recall that 
\begin{align*}
    R(n,k) % \frac{1}{\Lambda_n(k)} \!\!\sum_{\substack{\nu_1,\nu_2\vdash n \\ (\nu_1,\nu_2) \neq (1^n)^2}}\! \frac{1}{z_{\nu_1} z_{\nu_2}} \sum_{\lambda\vdash k}\prod_{j=1}^{k}\binom{|T_{\nu_1,\nu_2}(j)|/j}{m_j(\lambda)}\\
    &= \sum_{\lambda\vdash k} \frac{1}{\Lambda_n(k)} \!\!\sum_{\substack{\nu_1,\nu_2\vdash n \\ (\nu_1,\nu_2) \neq (1^n)^2}}\! \frac{1}{z_{\nu_1} z_{\nu_2}} \prod_{j=1}^{k}\binom{|T_{\nu_1,\nu_2}(j)|/j}{m_j(\lambda)}\;.
\end{align*}
Note that as soon as $m_1(\lambda)\neq 0$, we must have $\nu_1=\nu_2$ for the product term to be non-zero since $|T_{\nu_1,\nu_2}(1)|= \delta_{\nu_1,\nu_2} z_{\nu_1}$ by \Cref{lem - partition}. Let us define 
\begin{align*}
    R_0(n, k) &:= \sum_{\substack{\lambda\vdash k \\m_1(\lambda)=0}} \frac{1}{\Lambda_n(k)} \!\!\sum_{\substack{\nu_1,\nu_2\vdash n \\ (\nu_1,\nu_2) \neq (1^n)^2}}\! \frac{1}{z_{\nu_1} z_{\nu_2}} \prod_{j=2}^{k}\binom{|T_{\nu_1,\nu_2}(j)|/j}{m_j(\lambda)}\\
    R_1(n, k) &:= \sum_{\substack{\lambda\vdash k\\ m_1(\lambda)\neq 0}} \frac{1}{\Lambda_n(k)} \sum_{(1^n)\neq\nu\vdash n} \frac{1}{z_{\nu}^2} \prod_{j=1}^{k}\binom{|T_{\nu,\nu}(j)|/j}{m_j(\lambda)}\;,
\end{align*}
so that we have $R(n,k)=R_0(n,k)+R_1(n,k)$. To control $R_{0}(n, k)$, observe that 
\begin{align*}
    R_0(n, k) &\leq \frac{1}{\Lambda_n(k)} \!\!\sum_{\substack{\nu_1,\nu_2\vdash n \\ (\nu_1,\nu_2) \neq (1^n)^2}}\! \frac{1}{z_{\nu_1} z_{\nu_2}} \sum_{\substack{\lambda\vdash k \\m_1(\lambda)=0}} \prod_{j=2}^{k}\binom{|T_{\nu_1,\nu_2}(j)|}{jm_j(\lambda)}^{\!1/2} \\
    &\leq  \frac{1}{\Lambda_n(k)} \!\!\sum_{\substack{\nu_1,\nu_2\vdash n \\ (\nu_1,\nu_2) \neq (1^n)^2}}\! \frac{1}{z_{\nu_1} z_{\nu_2}} \binom{n!}{k}^{\!1/2} \leq n!^2 \binom{n!}{k}^{\!-1/2} \\
    &\leq n!^2 \binom{n!}{6}^{\smash{\!-1/2}} \leq \frac{30}{n!}\leq \frac{1}{n^2}\;. 
\end{align*}
Here, we used the fact that $\binom{n}{m}^{\smash{j}}\leq \binom{jn}{jm}$ for any $j\geq 1$ in the first inequality. In the second inequality, we used the generalized Chu-Vandermonde inequality and the fact that $\sum_{j\geq 1} |T_{\nu_1,\nu_2}(j)| = n!$. The third inequality uses the fact that $\sum z_{\nu}^{-1}=1$, and the fourth follows by noting that $k\mapsto\binom{n!}{k}$ is increasing for all $6\leq k \leq n!/2$. The last two inequalities hold for all $n\geq 5$ by direct verification. 
On the other hand, we have
\begin{align*}
    R_1(n, k) &\leq \frac{1}{\Lambda_n(k)} \sum_{(1^n)\neq\nu\vdash n } \frac{1}{z_{\nu}^2} \sum_{\lambda\vdash k}\binom{|T_{\nu, \nu}(1)|}{m_1}^{\!1/2}  \cdot \prod_{j=1}^{k}\binom{|T_{\nu,\nu}(j)|}{jm_j}^{\!1/2} \\
    &\leq  \frac{1}{\Lambda_n(k)} \sum_{(1^n)\neq\nu\vdash n} \frac{1}{z_{\nu}^2} \left(\sum_{\lambda\vdash k}\binom{|T_{\nu, \nu}(1)|}{m_1}\right)^{\!1/2} \left( \sum_{\lambda\vdash k}\prod_{j=1}^{k}\binom{|T_{\nu,\nu}(j)|}{jm_j}\right)^{\!1/2} \\
    &\leq n!^2 \binom{n!}{k}^{\!-\frac{1}{2}} \!\!\!\sum_{(1^n)\neq\nu\vdash n } \frac{1}{z_{\nu}^2} \left( \sum_{\lambda\vdash k} \binom{z_{\nu}}{m_1} \right)^{\!\frac{1}{2}} \leq \sqrt{p(k)}n!^2 \binom{n!}{k}^{\!-\frac{1}{2}} \!\!\!\sum_{(1^n)\neq\nu\vdash n } \frac{1}{z_{\nu}^2} \left( \sum_{m=1}^{k} \binom{z_{\nu}}{m} \right)^{\!\frac{1}{2}},
\end{align*}
where the second inequality follows from Cauchy--Schwarz and the third inequality uses the Chu--Vandermonde inequality. First, suppose that $z_\nu \leq k$. Since $\binom{z_{\nu}}{m}=0$ for all $m>z_\nu$, we have
\[
\sum_{m=1}^{k} \binom{z_{\nu}}{m}  = \sum_{m=1}^{z_\nu} \binom{z_{\nu}}{m} \leq 2^{z_\nu}.
\]
On the other hand for $z_\nu > k$ we have
\[
\sum_{m=1}^{k} \binom{z_{\nu}}{m} \leq \sum_{m=1}^{k} \frac{z_{\nu}^m}{m!} = \sum_{m=1}^{k} \frac{k^m}{m!}  \cdot \frac{z_\nu^m}{k^m} \leq e^k \cdot \frac{z_\nu^k}{k^k}.
\]
Combining this with \Cref{lemma: asymptotic for sum of z}, we find that for all $\varepsilon>0$, there is some $N_\varepsilon\in\mathbb{N}$ such that for all $n\geq N_\varepsilon$ we have
\begin{align*}
    \sum_{(1^n)\neq\nu\vdash n } \frac{1}{z_{\nu}^2} \left( \sum_{m=1}^{k} \binom{z_{\nu}}{m} \right)^{\!\frac{1}{2}} &\leq \sum_{\substack{(1^n)\neq\nu\vdash n \\ z_\nu \leq k}} \frac{2^{z_\nu/2}}{z_{\nu}^2} + \frac{e^{\frac{k}{2}}} {k^{\frac{k}{2}}} \sum_{\substack{(1^n)\neq\nu\vdash n \\ z_\nu > k}} z_\nu^{\frac{k}{2}-2} \leq \sum_{\nu\vdash n} \frac{2^{k/2}}{k^2} + \frac{e^{\frac{k}{2}}} {k^{\frac{k}{2}}} \sum_{(1^n)\neq\nu\vdash n} z_\nu^{\frac{k}{2}-2} \\
    &\leq \frac{2^{\frac{k}{2}} p(n)}{k^2} + \frac{e^{\frac{k}{2}} }{k^{\frac{k}{2}} } \left(\sum_{(1^n)\neq\nu\vdash n} z_\nu \right)^{\!\!\frac{k}{2}-2} \!\!\leq \frac{2^{\frac{k}{2}}  p(n)}{k^2} + \frac{e^{\frac{k}{2}} }{k^{\frac{k}{2}} } \left(\frac{(2+\varepsilon)n!}{n^2} \right)^{\!\frac{k}{2}-2}\!.
\end{align*}

Let us further define 
\begin{align*}
    R_1'(n,k) &:= \sqrt{p(k)}n!^2 \binom{n!}{k}^{\!-\frac{1}{2}} \frac{2^{k/2} p(n)}{k^2}, \\
    R_1''(n,k) &:=  \sqrt{p(k)}n!^2 \binom{n!}{k}^{\!-\frac{1}{2}} \frac{e^{k/2}}{k^{k/2}} \left(\frac{(2+\varepsilon)n!}{n^2} \right)^{\!\frac{k}{2}-2}.
\end{align*}
Then $R_1(n,k) \leq  R_1'(n,k) +  R_1''(n,k)$. We now treat each term independently. Observe that for any $\alpha>0$, we have $\sqrt{x}\le\alpha x+\frac{1}{4\alpha}$. Since $p(k) \leq \frac{\exp({\pi\sqrt{2k/3}})}{4\sqrt{3}k}$, it follows that
\[
\sqrt{p(k)} \leq \frac{\exp\!\big(\pi\sqrt{k/6}\big)}{2\cdot 3^{1/4} \sqrt{k}} \leq \frac{\exp\!\big(\frac{\pi}{\sqrt{6}} \left(\alpha k+\frac{1}{4\alpha} \right)\!\big)}{2\cdot 3^{1/4} \sqrt{k}}.
\]
Therefore,
\begin{align*}
    R_1'(n,k) 
    &\leq p(n)n!^2 \cdot \binom{n!}{k}^{\!-\frac{1}{2}} \cdot \frac{\exp\!\big(\frac{\pi}{\sqrt{6}}\left(\alpha k+\frac{1}{4\alpha} \right)+\frac{k}{2}\big)}{2\cdot 3^{1/4} k^{5/2}} =: T_n(k) .
\end{align*}
\Cref{lemma: log-convex} ensures that $T_n(k)$ is a product of log-convex functions relative to $k$ in the interval $[6,n!/2]$. Hence, $T_n(k)$ is also log-convex and thus its maximum is realized either at $k=6$ or $k=n!/2$. We find, using \eqref{eq - estimations}, that
\begin{align*}
    T_n(6) &= p(n)n!^2  \binom{n!}{6}^{\!-\frac{1}{2}}  \frac{\exp\!\big(\frac{\pi}{\sqrt{6}}\left(6\alpha+\frac{1}{4\alpha} \right)+\frac{6}{2}\big)}{2\cdot 3^{1/4} 6^{5/2}} \leq C_\alpha \frac{p(n)}{n!}\;.
\end{align*}
At the second endpoint, we use \eqref{eq - binom_even} to obtain
\begin{align*}
    T_n\!\left(\frac{n!}{2}\right) = p(n)n!^2  \binom{n!}{n!/2}^{\!-\frac{1}{2}}  \frac{e^{\frac{\pi}{2\sqrt{6}}\left(\alpha n!+\frac{1}{2\alpha} \right)+\frac{n!}{4}}}{2\cdot 3^{1/4} (n!/2)^{5/2}} %= C_\alpha'  \frac{p(n)}{\sqrt{n!}} \binom{n!}{n!/2}^{\!-\frac{1}{2}} \exp\!\left( \left(\frac{\pi\alpha}{2\sqrt{6}} + \frac{1}{4}\right)\!n!\right) \\
    \leq C_\alpha' \frac{p(n)}{n!^{3/4}} \exp\!\left( \left(\frac{\pi\alpha}{2\sqrt{6}} + \frac{1}{4}-\frac{\log(2)}{2}\right)\!n!\right).
\end{align*}
Since $\alpha>0$ was arbitrary, we set $\alpha:=\sqrt{6}\left(\log\left(2\right)-\frac{1}{2}\right)/\pi >0$ to obtain 
\[
T_n\!\left(\frac{n!}{2}\right) \leq C_\alpha' \frac{p(n)}{n!^{3/4}}\;. 
\]
Therefore, $R_1'(n,k)$ always converges superexponentially to 0. 
Now, using \eqref{eq - estimations}, we find that $R_1''(n,k)$ satisfy
\vspace{-3pt}
\begin{align*}
    R_1''(n,k) \leq e^2\sqrt{p(k)}  \left( \frac{(2+\varepsilon)e}{n^2}\right)^{\!\smash{\frac{k}{2}-2}}.
\end{align*}
It is known that $p(k)$ is log-concave for all $k > 25$, so $\frac{p(k+1)}{p(k)} \leq \frac{p(k)}{p(k-1)}$ for all $k > 25$ \cite{Desalvo2015}. Hence, the maximum of $p(k+1)/p(k)$ occurs for some $1 \leq k \leq 25$. Computing these ratios for $5 \leq k < 25$ shows the maximum is $15/11$. Therefore, $k \mapsto p(k)/\beta^k$ is decreasing for $k \geq 6$ when $\beta \geq \frac{15}{11}$. Since $n \geq 5$, we have $\frac{n^2}{(2+\varepsilon)e} > \frac{15}{11}$ for all sufficiently small $\varepsilon > 0$, so we obtain
\[
R_1''(n,k)  \leq e^2\sqrt{p(6)}  \left( \frac{(2+\varepsilon)e}{n^2}\right)^{\!\frac{6}{2}-2} \!\!= \frac{(2+\varepsilon)e^3\sqrt{11}}{n^2},
\]
which completes the proof.
% Hence, we finally have
% \begin{align*}
%     R_1(n,k) &\leq R_1'(n,k)+R_1''(n,k) \leq C_\alpha' \frac{p(n)}{n!^{3/4}} + \frac{(2+\varepsilon)e^3\sqrt{11}}{n^2} \leq \frac{C}{n^2}
% \end{align*}
% for some constant $C$ which is \emph{independant of $k$}. This concludes the proof.
\end{proof}

% \section{Proof of Corollary~\ref{thm: bulk asymptotic}}
% \begin{proof}[Proof of Corollary~\ref{thm: bulk asymptotic}]
% By Theorem~\ref{thm:Asymptotic}, we have 
% \begin{align*}
%     B_n(x) = \frac{T(n, k_n(x))}{\Lambda_n} &= \frac{\binom{n!}{k_n(x)}}{\binom{n!}{k_n(0)}}(1+o_n(1))\;.
% \end{align*}
% The conclusion then follows directly from De Moivre--Laplace theorem, which states that
% \begin{align*}
%     2^{-n!}\binom{n!}{k_n(x)} =(1+o_n(1)) \frac{\exp\!\big(\!-\frac{(k_n(x)-n!/2)^2}{n!/2}\big)}{\sqrt{\pi n!/2}}.
% \end{align*}
% % % 
% % Therefore, it follows that
% % \[ B_n(x) = (1+o_n(1)) \exp\!\Big(\!-\frac{(k_n(x)-n!/2)^2}{n!/2}\Big)\to \exp(-x^2/2)\;.\]
% \end{proof}

\section{Open Questions}

\cite[Theorem 4.1]{diaconis2008fixed} gives the distribution of $\binom{[n]}{k}^{\!\sigma}$ for a uniformly random permutation $\sigma$, using~\ref{eqn:DFG_Formula}, and shows that the cycle counts $(c_j(\sigma))_{j \leq k}$ converge to independent Poisson random variables. This result raises the question of whether a similar description holds for random \emph{pairs} of permutations. In particular, analyzing the joint distribution of the statistics $|T_{g_1, g_2}(j)|$ when $(g_1, g_2)$ is chosen uniformly from $\Sym{n}\times \Sym{n}$ could shed light on the structure of their action.

\medskip
\noindent\textbf{Problem 1 (Random Pair Statistics).} Describe the joint distribution of the statistics $|T_{g_1, g_2}(j)|$ for a uniformly random pair of permutations $(g_1, g_2) \in \Sym{n}^2$. In particular, determine the distribution of the number of fixed points of the pair $(g_1, g_2)$ acting on $\Omega(n, k)$.
\medskip

Moreover, \Cref{cor - log-concavity} shows that \( B_n(x) \) becomes strictly log-concave as \( n \to \infty \), suggesting that log-concavity might eventually hold uniformly. Hence, this naturally raises the following question.

\medskip
\noindent\textbf{Problem 2 (Threshold for Log-Concavity).} Does there exist an integer \( N \) such that \( B_n(x) \) is log-concave for all \( n \geq N \)? If so, what is the smallest such \( N \)?
\medskip

The case \( k = 3 \) and \( \lambda = (1^3) \) in the proof of \Cref{prop - k=3to5} shows that the quadratic convergence rate of \( T(n,k)/\Lambda_n(k) \xrightarrow{n\to\infty} 1 \) in \Cref{thm:Asymptotic} is sharp \emph{uniformly on \( 3 \leq k \leq n! - 3 \)}. However, for fixed values of \( k \), the convergence may be faster. In fact, our proof shows that for every fixed \( k \), there exists a constant \( C_k > 0 \), independent of \( n \), such that
\[
R(n,k) \leq \frac{C_k}{n^{\max\{2, k-4\}}}.
\]
This motivates the following question.

\medskip
\noindent\textbf{Problem 3 (Rate of Convergence).} For a fixed value of \( k \), what is the optimal convergence rate of \( R(n,k) \to 0 \) as \( n \to \infty \)?
\medskip

In the same spirit as the above problem, \Cref{thm:Asymptotic} and its proof also suggest that, for fixed \( n \), the ratio \( T(n,k)/\Lambda_n(k) \) may admit an asymptotic expansion in powers of \( 1/n \) of the form
\[
T(n, k) = \Lambda_n(k) \bigg(1 + \sum_{j \geq 1} A^{(n,k)}_j \frac{1}{n^j} \bigg).
\]
Such expansions are common in the context of partition functions of Gibbs measures in statistical physics (see, e.g.,~\cite{Geloun2022}), and their coefficients often admit combinatorial interpretations. Establishing such an expansion for \( T(n, k) \) would be an interesting direction for further research.

\medskip
\noindent\textbf{Problem 4 (Asymptotic Expansion).} Does the ratio \( T(n, k)/\Lambda_n(k) \) admit an asymptotic expansion in powers of \( 1/n \) for each fixed \( n \)? If so, can the coefficients \( A^{(n,k)}_j \) be given a combinatorial interpretation?
\medskip

Although the bounds in \Cref{lem - partition} and \Cref{lem - ratio} suffice for our purposes, they are not optimal. It is natural to ask whether sharp bounds, or even exact formulas, can be given for \( z_{\nu^m} \) in terms of the cycle counts \( m_k \) of a partition \( \nu \), for arbitrary \( m > 1 \). While such formulas exist, they tend to be complex and difficult to simplify. In special cases (e.g., \( m = 2, 3 \)), we can show that the maximum of \( z_{\nu^m}/z_\nu \) is achieved when \( \nu \) consists of \( t_{n,m} \) cycles of length \( m \), with
\[
\max_{\nu \vdash n} \frac{z_{\nu^m}}{z_\nu} = \frac{n!}{(n - t_{n,m}m)! \, m^{t_{n,m}} \, t_{n,m}!}, ~\quad 
t_{n,m} = \left\lceil \frac{n + 2 - \sqrt[m]{n + 2}}{m} \right\rceil - 1.
\]
Hence, we propose the following problem.

\medskip
\noindent\textbf{Problem 5 (Bounds for \( \bm{z_{\nu^m}} \)).} Can one obtain sharp bounds or exact formulas for \( z_{\nu^m} \) in terms of the cycle counts \( m_k \) of a partition \( \nu \), for arbitrary \( m \geq 1 \)?
\medskip

The transformation \( \nu \mapsto \nu^m \) also induces a map on Young diagrams, via the correspondence between partitions and diagram shapes. The shape of the Young diagram corresponding to random partitions (e.g., chosen according to Plancherel measure) has yielded extremely rich and deep mathematical results. This motivates our last proposed problem.

\medskip
\noindent\textbf{Problem 6 (Random Young Diagrams).} If \( \nu \) is chosen at random (e.g., under Plancherel measure), how does the shape of \( \nu^m \) behave as \( n \to \infty \)? More broadly, can one describe the joint distribution of the sequence of Young diagrams \( \text{shape}(\nu),\, \text{shape}(\nu^2),\, \text{shape}(\nu^3), \ldots \)?

\bibliography{main.bib}

\begin{thebibliography}{VLW01}

\bibitem[ABT97]{arratia1997random}
Richard Arratia, AD~Barbour, and Simon Tavar{\'e}.
\newblock Random combinatorial structures and prime factorizations.
\newblock {\em Notices of the American Mathematical Society}, 44(8):903--910,
  1997.

\bibitem[BG75]{bender1975applications}
Edward~A. Bender and Jay~R. Goldman.
\newblock On the applications of {M}{\"o}bius inversion in combinatorial
  analysis.
\newblock {\em The American Mathematical Monthly}, 82(8):789--803, 1975.

\bibitem[BGR22]{Geloun2022}
Joseph Ben~Geloun and Sanjaye Ramgoolam.
\newblock All-orders asymptotics of tensor model observables from symmetries of
  restricted partitions.
\newblock {\em J. Phys. A}, 55(43):Paper No. 435203, 37, 2022.

\bibitem[DFG08]{diaconis2008fixed}
Persi Diaconis, Jason Fulman, and Robert Guralnick.
\newblock On fixed points of permutations.
\newblock {\em Journal of Algebraic Combinatorics}, 28(1):189--218, 2008.

\bibitem[DP15]{Desalvo2015}
Stephen DeSalvo and Igor Pak.
\newblock Log-concavity of the partition function.
\newblock {\em Ramanujan J.}, 38(1):61--73, 2015.

\bibitem[For22]{Ford2022Cycle}
Kevin Ford.
\newblock Cycle type of random permutations: a toolkit.
\newblock {\em Discrete Analysis}, 9 2022.

\bibitem[FS09]{flajolet2009analytic}
Philippe Flajolet and Robert Sedgewick.
\newblock {\em Analytic combinatorics}.
\newblock cambridge University press, 2009.

\bibitem[Gon42]{Goncharev}
V.~Goncharov.
\newblock Sur la distribution des cycles dans les permutations.
\newblock {\em C. R. (Dokl.) Acad. Sci. URSS (N.S.)}, 35:267–269, 1942.

\bibitem[Inc25]{OEIS_setEquivalent}
OEIS~Foundation Inc.
\newblock Entry a381842: {T}he {O}n-{L}ine {E}ncyclopedia of {I}nteger
  {S}equences, 2025.

\bibitem[KT25]{kushwaha2025note}
Aman Kushwaha and Raghavendra Tripathi.
\newblock A note on {E}rd{\H{o}}s matrices and {M}arcus-{R}ee inequality.
\newblock {\em arXiv preprint arXiv:2503.09542}, 2025.

\bibitem[Qi10]{Qi2010}
Feng Qi.
\newblock Bounds for the ratio of two gamma functions.
\newblock {\em J. Inequal. Appl.}, pages Art. ID 493058, 84, 2010.

\bibitem[Tri25]{tripathi2025some}
Raghavendra Tripathi.
\newblock Some observations on {E}rd{\H{o}}s matrices.
\newblock {\em Linear Algebra and its Applications}, 708:236--251, 2025.

\bibitem[VLW01]{van2001course}
Jacobus~Hendricus Van~Lint and Richard~Michael Wilson.
\newblock {\em A course in combinatorics}.
\newblock Cambridge University Press, 2001.

\bibitem[VM15]{Vignat15}
Christophe Vignat and Victor~H. Moll.
\newblock A probabilistic approach to some binomial identities.
\newblock {\em Elem. Math.}, 70(2):55--66, 2015.

\bibitem[Wat58]{Watson1958}
G.~N. Watson.
\newblock A note on {G}amma functions.
\newblock {\em Proc. Edinburgh Math. Soc. (2)}, 11:7--9, 1958.

\end{thebibliography}

\end{document}